\documentclass[a4paper,11pt]{amsart}
\usepackage[titletoc,toc,title]{appendix}
\usepackage[left=2.7cm,right=2.7cm,top=3.5cm,bottom=3cm]{geometry}
\usepackage{amssymb,latexsym,amsmath,amsthm,amscd}
\usepackage{graphicx}
\usepackage{dsfont}
\usepackage[all]{xy}
\usepackage{mathrsfs}
\usepackage{hyperref}
\usepackage{color}
\definecolor{Blue}{rgb}{0.3,0.3,0.9}

\usepackage[T2A,OT1]{fontenc}
\DeclareSymbolFont{cyrillic}{T2A}{cmr}{m}{n}
\DeclareMathSymbol{\Sha}{\mathalpha}{cyrillic}{216}


\vfuzz2pt 
\hfuzz2pt 


\newtheorem{thm}{Theorem}[section]
\newtheorem{def-thm}[thm]{Definition-Theorem}
\newtheorem{cor}[thm]{Corollary}

\newtheorem{def-lem}[thm]{Definition-Lemma}
\newtheorem{prop}[thm]{Proposition}
\newtheorem{conj}[thm]{Conjecture}

\theoremstyle{definition}

\theoremstyle{remark}
\newtheorem*{rem}{Remark}
\numberwithin{thm}{section}
\numberwithin{equation}{section}


\newcommand{\cO}{\mathcal{O}}

\newcommand{\pp}{\mathfrak{p}}

\newcommand{\bQ}{\mathbf{Q}}

\newcommand{\bZ}{\mathbf{Z}}





\begin{document}

\title[Exceptional zeros for Heegner points and $p$-converse]{Exceptional zeros for Heegner points and $p$-converse to the theorem of Gross--Zagier and Kolyvagin}


\author{Francesc Castella}
\address{University of California Santa Barbara, South Hall, Santa Barbara, CA 93106, USA}
\email{castella@ucsb.edu}



\date{\today}



\maketitle

\begin{abstract}
We prove a $p$-converse to the theorem of Gross--Zagier and Kolyvagin for elliptic curves $E/\bQ$ at primes $p>3$ of multiplicative reduction. Two key ingredients in the argument are an extension to this setting of a $p$-adic formula of Bertolini--Darmon--Prasanna obtained in our earlier work \cite{cas-split}, and an exceptional zero formula for Heegner points. 

By independent approaches different from ours, a similar $p$-converse theorem was obtained by Skinner--Zhang \cite{skinner-zhang} under additional ramification hypotheses on $E[p]$, and by Venerucci \cite{venerucci-conv} assuming  finiteness of $\Sha(E/\bQ)[p^\infty]$. 
\end{abstract}

%


\section{Introduction}\label{sec:intro}

\subsection{Main results} 

The main result of this note is the following $p$-converse  to the theorem of Gross--Zagier and Kolyvagin for rational elliptic curves at primes $p$ of multiplicative reduction. 

\begin{thm}\label{thm:p-conv}
Let $E/\bQ$ be an elliptic curve with multiplicative reduction at the prime $p>3$. Assume that
\begin{itemize}
\item[(i)] $E[p]$ is irreducible as a $G_{\bQ}:={\rm Gal}(\overline{\bQ}/\bQ)$-module,
\item[(ii)] $E$ has nonsplit multiplicative reduction at some prime $q\neq p$ where $E[p]$ is ramified,  
\item[(iii)] $E(\bQ_p)[p]=0$.
\end{itemize}
Then
\[
{\rm corank}_{\bZ_p}{\rm Sel}_{p^\infty}(E/\bQ)=1\quad\Longrightarrow\quad{\rm ord}_{s=1}L(E,s)=1,
\]
and so ${\rm rank}_{\bZ}E(\bQ)=1$ and $\#\Sha(E/\bQ)<\infty$.
\end{thm}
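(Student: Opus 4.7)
The proof I would follow is via anticyclotomic Iwasawa theory of Heegner points, adapting the modern template for $p$-converse theorems (à la Skinner--Urban, Wei Zhang, Burungale) to the exceptional-zero setting forced by multiplicative reduction at $p$. The two ingredients flagged in the abstract, the extended BDP formula from \cite{cas-split} and an exceptional zero formula for Heegner points, are precisely what will bridge the analytic and algebraic sides.

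\textbf{Step 1: Auxiliary imaginary quadratic field.} Using hypotheses (i)--(iii), I would choose an imaginary quadratic field $K$ in which $p$ splits (available since $E(\bQ_p)[p]=0$ rules out the relevant obstruction), such that $L(E^K,1)\neq 0$ and the factorization $N=N^+N^-$ of the conductor places the prime $q$ from (ii) in $N^-$, verifying the generalized Heegner hypothesis. The irreducibility of $E[p]$ and ramification at $q$ are used both to make such $K$ exist and to propagate the corank hypothesis to ${\rm corank}_{\bZ_p}{\rm Sel}_{p^\infty}(E/K)=1$ (since the twist contributes corank $0$). Let $P_K\in E(K)$ denote the associated Heegner point.

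\textbf{Step 2: Exceptional zero formula and $p$-adic $L$-function.} Over the anticyclotomic $\bZ_p$-extension $K_\infty/K$, I would consider the BDP-type $p$-adic $L$-function $\mathcal{L}_p$ extended to the multiplicative setting in \cite{cas-split}. Because $E$ has multiplicative reduction at $p$, the trivial character of $\Gamma^{\ac}:={\rm Gal}(K_\infty/K)$ is an exceptional zero of $\mathcal{L}_p$, and the BDP formula of \cite{cas-split} degenerates there. The key new input is the exceptional zero formula for Heegner points: its conclusion should read, roughly,
\[
\mathcal{L}_p'(\mathbf{1}) \;=\; \mathcal{L}(E)\cdot (\text{explicit factor})\cdot \log_p(P_K)^2,
\]
where $\mathcal{L}(E)$ is an $\mathcal{L}$-invariant of the type appearing in Greenberg--Stevens, adapted to the anticyclotomic Heegner context.

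\textbf{Step 3: Assembly.} Via the (one-sided) anticyclotomic Iwasawa main conjecture available under these hypotheses, the corank-$1$ condition ${\rm corank}_{\bZ_p}{\rm Sel}_{p^\infty}(E/K)=1$ forces the characteristic ideal of the appropriate Heegner Iwasawa module, equivalently $\mathcal{L}_p$, to vanish to order exactly $1$ at the trivial character. The exceptional zero formula of Step 2 then yields $\log_p(P_K)\neq 0$, so that $P_K$ is non-torsion in $E(K)$. Applying Kolyvagin's theorem to this $P_K$ gives ${\rm rank}_{\bZ}E(K)=1$ and $\#\Sha(E/K)[p^\infty]<\infty$, and by Gross--Zagier, ${\rm ord}_{s=1}L(E/K,s)=1$; together with $L(E^K,1)\neq 0$ and the factorization $L(E/K,s)=L(E,s)L(E^K,s)$, this gives ${\rm ord}_{s=1}L(E,s)=1$, after which the rank and $\Sha$ consequences for $E/\bQ$ follow from Gross--Zagier--Kolyvagin.

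The main obstacle is unquestionably Step 2. Identifying the correct $\mathcal{L}$-invariant intrinsic to the Heegner/anticyclotomic picture, and proving that $\mathcal{L}_p'(\mathbf{1})$ actually equals this invariant times $\log_p(P_K)^2$ in a way precise enough to be fed into the Iwasawa-theoretic input of Step 3, is the delicate new analytic ingredient. The extension of the BDP formula in \cite{cas-split} supplies the right framework away from the trivial character; the leading-term analysis at the trivial character, together with the computation of the exceptional factor, is the crux of the argument.
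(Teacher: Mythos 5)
Your architecture — auxiliary $K$ with $L(E^K,1)\neq 0$, anticyclotomic Iwasawa theory over $K_\infty/K$, an exceptional-zero input at the trivial character, then Gross--Zagier and the factorization $L(E/K,s)=L(E,s)L(E^K,s)$ — matches the paper's. But the proposal has a genuine gap at the step you yourself flag as the crux, and the way you formulate that crux is not what the paper proves. You posit a Greenberg--Stevens-type leading-term formula $\mathcal{L}_p'(\mathbf{1})=\mathcal{L}(E)\cdot(\ast)\cdot\log_p(P_K)^2$ for the anticyclotomic BDP $p$-adic $L$-function and leave it unproven. The paper never computes a derivative of $L_\pp(f)$ at $\mathbf{1}$. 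Instead the exceptional zero lives entirely on the algebraic side: the norm relations force ${\rm pr}_0(\mathbf{z}_\infty)=0$ in the split case, one forms the derived class $\mathbf{z}_\infty'$ with $\mathbf{z}_\infty=(\gamma-1)\mathbf{z}_\infty'$, and the two inputs are (a) the explicit reciprocity law $\tilde{\mathcal{L}}_\pp({\rm loc}_\pp(\mathbf{z}_\infty'))=L_\pp(f)$ (Theorem~\ref{thm:ERL}, built from the Coleman map and the Abel--Jacobi computations of \cite{cas-split} — note $L_\pp(f)$ itself, not its derivative, is the image of the \emph{derived} class), and (b) the formula ${\rm pr}_0(\mathbf{z}_\infty')=\mathscr{L}_\pp(f,K)\cdot\kappa_f$, which is imported from Molina and Disegni rather than proved ab initio. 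The main conjecture for $L_\pp(f)$ from \cite{cas-CJM} is then converted via (a) into a Heegner-point main conjecture for $\mathbf{z}_\infty^*$ (Proposition~\ref{prop:equiv}, Theorem~\ref{thm:HPMC}), and the corank-one hypothesis forces ${\rm pr}_0(\mathbf{z}_\infty^*)$ to be non-torsion; by (b) it is a multiple of $\kappa_f$, so $\kappa_f$ is non-torsion and Gross--Zagier finishes. No nonvanishing of the $\mathcal{L}$-invariant is needed as input — it comes out as a byproduct.

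Two further points in Step 3 need attention. First, "the (one-sided) main conjecture" is ambiguous: the Kolyvagin-system divisibility (Selmer bounded above by Heegner module) is useless here; a converse theorem needs the opposite divisibility (the paper uses the full equality from \cite{cas-CJM,cas-CJM-err}, whose hypotheses are exactly why conditions (i)--(iii) on $E[p]$ and the reduction type at $q$ appear). Second, translating ${\rm corank}_{\bZ_p}{\rm Sel}_{p^\infty}(E/K)=1$ into an order-of-vanishing statement at $\mathbf{1}$ requires Mazur's control theorem, which in the split multiplicative case has its own exceptional zero at $\pp$; the paper resolves this by invoking the transcendence result $\log_p(q_E)\neq 0$ of \cite{BSDGG}. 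Your assembly silently assumes both of these. (Minor: $p$ splits in $K$ simply because every prime of $N$ other than $q$ is chosen split; it has nothing to do with $E(\bQ_p)[p]=0$, which is instead a hypothesis of the main-conjecture input.)
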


As an important application, Theorem~\ref{thm:p-conv} allows us to extend the reach of $p$-adic conditions under which an elliptic curve $E$ over $\bQ$ can be shown to have algebraic and analytic rank one (and hence to satisfy the Birch--Swinnerton-Dyer conjecture).

\begin{cor}\label{cor:pBSD}
Let $E/\bQ$ be an elliptic curve with multiplicative reduction at the prime $p>3$ satisfying conditions (i)--(iii) in Theorem~\ref{thm:p-conv}. If the $p$-Selmer group ${\rm Sel}_p(E/\bQ)\subset{\rm H}^1(\bQ,E[p])$ has order $p$, then
\[
{\rm rank}_{\bZ}E(\bQ)={\rm ord}_{s=1}L(E,s)=1
\] 
and $\#\Sha(E/\bQ)<\infty$; 
in particular, the Birch--Swinnerton-Dyer conjecture holds for $E$.
\end{cor}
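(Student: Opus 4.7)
The plan is to reduce Corollary~\ref{cor:pBSD} to Theorem~\ref{thm:p-conv} by upgrading the mod-$p$ input $\#{\rm Sel}_p(E/\bQ)=p$ to the statement that ${\rm corank}_{\bZ_p}{\rm Sel}_{p^\infty}(E/\bQ)=1$; once that is in place, Theorem~\ref{thm:p-conv} delivers both ${\rm ord}_{s=1}L(E,s)=1$ and the subsequent consequences of Gross--Zagier and Kolyvagin.

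The first step is to pass from ${\rm Sel}_p$ to ${\rm Sel}_{p^\infty}[p]$. Hypothesis (i) gives $E(\bQ)[p]=0$, and hence $E(\bQ)[p^\infty]=0$. Feeding this into the Galois cohomology long exact sequence for $0\to E[p]\to E[p^\infty]\xrightarrow{p} E[p^\infty]\to 0$, and using the standard compatibility of Selmer local conditions with respect to the inclusion $E[p]\hookrightarrow E[p^\infty]$, identifies ${\rm Sel}_p(E/\bQ)$ with ${\rm Sel}_{p^\infty}(E/\bQ)[p]$. Writing the cofinitely generated $\bZ_p$-module ${\rm Sel}_{p^\infty}(E/\bQ)$ as $(\bQ_p/\bZ_p)^{r}\oplus M$ with $M$ finite, the hypothesis then forces $p^{r}\cdot\#M[p]=p$, leaving only the two possibilities $(r,\#M[p])=(1,1)$ or $(0,p)$.

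The substance of the argument lies in excluding $r=0$, and this is the one step I would want to treat with care. In that case $E(\bQ)$ would have rank zero with no $p$-torsion, so ${\rm Sel}_{p^\infty}(E/\bQ)\cong\Sha(E/\bQ)[p^\infty]$ would be finite and ${\rm Sel}_p(E/\bQ)\cong\Sha(E/\bQ)[p]$. The Cassels--Tate pairing on $\Sha(E/\bQ)$ would then restrict to a non-degenerate alternating pairing on the finite abelian $p$-group $\Sha(E/\bQ)[p^\infty]$, which (since $p>2$) forces that group to be isomorphic to $\bigoplus_i(\bZ/p^{a_i})^{2}$. In particular $\#\Sha(E/\bQ)[p]$ would be an even power of $p$, contradicting $\#{\rm Sel}_p(E/\bQ)=p$. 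Hence $r=1$, and Theorem~\ref{thm:p-conv} completes the proof. Beyond this parity step, everything is formal given Theorem~\ref{thm:p-conv}, so I do not anticipate any serious additional obstacle.
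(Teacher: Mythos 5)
Your proof is correct and follows essentially the same route as the paper: the paper's one-line proof likewise invokes the non-degenerate alternating Cassels--Tate pairing on the non-divisible part of ${\rm Sel}_{p^\infty}(E/\bQ)$ (citing \cite[Thm.~9]{majority}) to rule out corank zero, since that would force $\#{\rm Sel}_p(E/\bQ)$ to be an even power of $p$, and then applies Theorem~\ref{thm:p-conv}. Your preliminary reductions (using $E(\bQ)[p]=0$ from irreducibility to identify ${\rm Sel}_p$ with ${\rm Sel}_{p^\infty}[p]$) are exactly the implicit content of that citation.
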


\begin{proof}
This is immediate from Theorem~\ref{thm:p-conv} and the existence of the alternating Cassels--Tate pairing on the quotient of ${\rm Sel}_{p^\infty}(E/\bQ)$ by its maximal divisible subgroup (see \cite[Thm.~9]{majority} for details, and note that the argument also shows $\Sha(E/\bQ)[p^\infty]=0$).
\end{proof}

\begin{rem}
Conversely, under the hypotheses of Corollary~\ref{cor:pBSD} (in fact, just assuming $E(\bQ)[p]=0$), condition ${\rm Sel}_p(E/\bQ)\simeq\bZ/p\bZ$ is necessary for the conclusion 
of that result to hold, as one sees immediately from the $p$-descent exact sequence
\[
0\rightarrow E(\bQ)/pE(\bQ)\rightarrow{\rm Sel}_p(E/\bQ)\rightarrow\Sha(E/\bQ)[p]\rightarrow 0.
\]
\end{rem}

We deduce Theorem~\ref{thm:p-conv} from the proof of a new variant of Perrin-Riou's Heegner point main conjecture \cite{PR-HP} reflecting the presence of ``exceptional zeros'' (in the sense of \cite{MTT}) in the split multiplicative case. We further explain this in the following.

Let $E$ and $p$ be as in Theorem~\ref{thm:p-conv}, let $N$ be the conductor of $E$, and let $K$ be an imaginary quadratic field of odd discriminant $-D_K<-4$ and satisfying the classical \emph{Heegner hypothesis} relative to $N$:
\begin{equation}\label{eq:Heeg-main}
\textrm{there exists an ideal $\mathfrak{N}\subset\cO_K$ with $\cO_K/\mathfrak{N}\simeq\bZ/N\bZ$}.\tag{Heeg}
\end{equation}

Let $K_\infty$ be the anticyclotomic $\bZ_p$-extension of $K$, let $L$ run over the finite extensions of $K$ contained in $K_\infty$, and put
\[
\check{S}_p:=\varprojlim_L\varprojlim_r{\rm Sel}_{p^r}(E/L),
\]
where the limits are with respect to the corestriction and the multiplication-by-$p$ maps. Upon the choice of a modular parametrization $X_0(N)\rightarrow E$, whose existence is granted by \cite{BCDT} and which we fix from now on, CM points on $X_0(N)$ give rise a collection of Heegner points $y_c\in E(H_c)$ defined over the ring class field $H_c$ of $K$ of conductor $c$ prime to $N$. Here we are interested in the case $c=p^m$ for arbitrary $m\geq 1$ (so $(c,N)>1$); as explained in \cite[\S{2}]{BDmumford-tate}, the construction of $y_c$ extends to this case.

Let $\Lambda=\bZ_p[\![{\rm Gal}(K_\infty/K)]\!]$ be the anticyclotomic Iwasawa algebra.
From the norm relations satisfied by the points $y_{c}$, taking the images under the Kummer map of (a slight modification of) the points $y_{p^m}$ for varying $m\geq 1$, we obtain a $\Lambda$-adic class
\[
\mathbf{z}_\infty\in\check{S}_p.
\]
These same norm relations imply, in the case where $p$ is a prime of \emph{split} multiplicative reduction for $E$, that $\mathbf{z}_\infty$ vanishes under the natural map (corestriction) 
\[
{\rm pr}_0:\check{S}_p\rightarrow\varprojlim_r{\rm Sel}_{p^r}(E/K).
\]
We can then show that existence of a ``derived'' Heegner class $\mathbf{z}_\infty'\in\check{S}_p$ defined by the relation $\mathbf{z}_\infty=(\gamma-1)\mathbf{z}_\infty'$, where $\gamma\in{\rm Gal}(K_\infty/K)$ is a topological generator. 
Put
\[
{\rm Sel}_{p^\infty}(E/K_\infty)=\varinjlim_L{\rm Sel}_{p^\infty}(E/L),
\]
where the limit is with respect to the restriction maps,  
and let 
\[
X={\rm Hom}_{\rm cts}({\rm Sel}_{p^\infty}(E/K_\infty),\bQ_p/\bZ_p)
\]
be its Pontryagin dual. Finally, let
\[
\mathbf{z}_\infty^{*}\in\check{S}_p
\]
stand for the $\Lambda$-adic Heegner class $\mathbf{z}_\infty$ or its derivative $\mathbf{z}_\infty'$ according to whether $E$ has nonsplit or split multiplicative reduction at $p$, respectively. 

The key new result we obtain 
in this note, from which the proof of Theorem~\ref{thm:p-conv} is ultimately deduced, is the following.

\begin{thm}\label{thm:HPMC}
Let $E/\bQ$ be an elliptic curve with multiplicative reduction at the prime $p>3$, and let $K$ be an imaginary quadratic field of odd discriminant $-D_K<-4$ satisfying hypothesis \eqref{eq:Heeg-main} relative to the conductor $N$ of $E$. Assume that:
\begin{itemize}
\item[(i)] 
$E[p]$ is irreducible as a $G_\bQ$-module, 
\item[(ii)] If $2$ is nonsplit in $K$, then $2\Vert N$,
\item[(iii)] $E$ has nonsplit multiplicative reduction at each prime $q\Vert N$ which is nonsplit in $K$, and that there is at least one such prime $q$ at which $E[p]$ is ramified, 
\item[(iv)] $E(\bQ_p)[p]=0$.
\end{itemize}
Then $X$ and $\check{S}_p$ both have $\Lambda$-rank one with
\[
{\rm char}_\Lambda(X_{\rm tors})={\rm char}_\Lambda\bigl(\check{S}_p/\Lambda\mathbf{z}_\infty^{*}\bigr)^2,
\]
where the subscript ${\rm tors}$ denotes the maximal $\Lambda$-torsion submodule.
\end{thm}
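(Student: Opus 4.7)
The plan is to follow the Howard--Wan strategy for Heegner point main conjectures, adapting it to the multiplicative reduction setting and, in the split case, to the presence of an exceptional zero for the anticyclotomic BDP $p$-adic $L$-function.

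First I would establish the rank assertions together with the Euler system divisibility
\[
{\rm char}_\Lambda(X_{\tors}) \;\big|\; {\rm char}_\Lambda\bigl(\check{S}_p/\Lambda\mathbf{z}_\infty^{*}\bigr)^2,
\]
by running Howard's Iwasawa-theoretic descent of Kolyvagin's argument on the family $\{y_{p^m}\}_{m\geq 1}$. Hypotheses (i) and (iii) are precisely what is needed to guarantee nonvanishing of the relevant Kolyvagin derivative classes and to control the local conditions at the bad primes: the prime $q$ of nonsplit multiplicative reduction at which $E[p]$ is ramified ensures the ``$p$-indivisibility'' of the underlying base Heegner class. In the split multiplicative case at $p$, the class $\mathbf{z}_\infty$ is killed by corestriction to the base level, so one substitutes the derived class $\mathbf{z}_\infty'$ throughout to keep the argument nondegenerate; hypothesis (iv), which forces the trivial zero to be simple, enters here.

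For the reverse divisibility I would combine two external inputs. The first is the anticyclotomic Iwasawa main conjecture ${\rm char}_\Lambda(X) = (\cL_\mathfrak{p})$ for a BDP-type $p$-adic $L$-function $\cL_\mathfrak{p}$, which under hypotheses (i)--(iii) is known from the Skinner--Urban proof of the Iwasawa main conjecture for ${\rm GL}_2/\bQ$ together with Wan's work on the relevant anticyclotomic specialization. The second is an explicit reciprocity law relating $\mathbf{z}_\infty^{*}$ to $\cL_\mathfrak{p}$ via a Perrin-Riou big-logarithm map; in the nonsplit case at $p$ this is a direct extension to multiplicative reduction of the reciprocity law of \cite{cas-split}. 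Feeding these two inputs into the standard Poitou--Tate global duality framework converts the main conjecture divisibility into the sought lower bound on ${\rm char}_\Lambda(\check{S}_p/\Lambda\mathbf{z}_\infty^{*})^2$, and thus into the equality of characteristic ideals.

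The main obstacle, and the new technical content of the paper, lies in the split multiplicative case at $p$. There, $\cL_\mathfrak{p}$ acquires an exceptional zero along the anticyclotomic line and the naive reciprocity law for $\mathbf{z}_\infty$ degenerates to $0=0$. The crux is therefore an exceptional zero formula for Heegner points, identifying the image of the derived class $\mathbf{z}_\infty'$ under the big logarithm with the leading term of $\cL_\mathfrak{p}$, up to an $\ccL$-invariant of Mazur--Tate--Teitelbaum type. This requires unwinding Perrin-Riou's logarithm on the semistable Dieudonn\'e module at $p$ and isolating the ``forbidden direction'' responsible for the trivial zero; once established, the formula slots into both divisibilities exactly as in the nonsplit case, and the equality of characteristic ideals follows uniformly.
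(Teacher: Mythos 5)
Your proposal overlaps with the paper's actual argument on what you call the ``reverse divisibility'': the paper deduces the whole theorem from (a) the explicit reciprocity law relating ${\rm loc}_\pp(\mathbf{z}_\infty^{*})$ to $L_\pp(f)$ via a Coleman/Perrin-Riou map, whose genuinely new content is indeed the derived formula $\tilde{\mathcal{L}}_\pp({\rm loc}_\pp(\mathbf{z}_\infty'))=L_\pp(f)$ in the split case (Theorem~\ref{thm:ERL}, Corollary~\ref{cor:ERL}); (b) the anticyclotomic main conjecture ${\rm char}_\Lambda(X_\pp)=(L_\pp(f))$, imported as a \emph{full equality} from \cite{cas-CJM,cas-CJM-err} rather than only the Wan-type divisibility (Theorem~\ref{thm:BDP-IMC}); and (c) the Poitou--Tate equivalence of Proposition~\ref{prop:equiv}, applicable once one knows $\mathbf{z}_\infty^{*}$ is non-torsion (a Cornut--Vatsal type input) and ${\rm loc}_\pp\neq 0$. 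Because the input in (b) is already an equality, both divisibilities in the Heegner-point formulation fall out of the equivalence simultaneously, and no Kolyvagin-system argument is run anywhere in this paper. Note also that the reciprocity law for $\mathbf{z}_\infty'$ outputs $L_\pp(f)$ itself, not a leading term weighted by an $\mathscr{L}$-invariant; the $\mathscr{L}$-invariant appears only in the specialization at the trivial character (Theorem~\ref{thm:exc-zero}), which is used in the proof of the $p$-converse theorem, not here.

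The genuine gap is in your first step. You propose to prove ${\rm char}_\Lambda(X_{\tors})$ divides ${\rm char}_\Lambda(\check{S}_p/\Lambda\mathbf{z}_\infty^{*})^2$ by Howard's Iwasawa-theoretic Kolyvagin descent, ``substituting $\mathbf{z}_\infty'$ throughout'' in the split case. But the derivation $\mathbf{z}_\infty=(\gamma-1)\mathbf{z}_\infty'$ is performed only in the $p$-direction of the tower: there is no construction of derived classes over the ring class fields of the auxiliary Kolyvagin conductors $n=\ell_1\cdots\ell_k$, and no reason the Euler system norm relations at the primes $\ell_i$ (which are what the derivative-operator argument consumes) survive division by $\gamma-1$. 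So $\mathbf{z}_\infty'$ is not known to underlie a Kolyvagin system, and your Euler-system divisibility does not follow by substitution; this is precisely the difficulty the paper's route avoids by transferring the already-known equality for $L_\pp(f)$ through Proposition~\ref{prop:equiv}. Two smaller points: hypothesis (iv) ($E(\bQ_p)[p]=0$) is not about simplicity of the trivial zero --- it ensures $\check{S}_p$ is torsion-free, so that $\mathbf{z}_\infty'$ is well defined, and it is a running hypothesis of Theorem~\ref{thm:BDP-IMC}; and the main conjecture input concerns the $\pp$-strict, $\overline{\pp}$-relaxed Selmer group $X_\pp$, not $X$ itself.
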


\begin{rem}
The conclusion of Theorem~\ref{thm:HPMC} might be viewed as  an extension of the Heegner point main conjecture in  \cite[Conj.~B]{PR-HP} to the multiplicative case, 
in a ``primitive'' form reflected in the appearance of the derivative class $\mathbf{z}_\infty'$ accounting for an exceptional zero phenomenon in the case of split multiplicative reduction (see Remark after Proposition~\ref{prop:equiv} for further details). 
\end{rem}

The proof of Theorem~\ref{thm:HPMC} is based on an extension of the ``explicit reciprocity law'' of \cite{cas-hsieh1} to the multiplicative case, which we obtain in Theorem~\ref{thm:ERL} and Corollary~\ref{cor:ERL} building on the $p$-adic Abel--Jacobi image computations in our earlier work \cite{cas-split}. The formula we obtain relates the image of $\mathbf{z}_{\infty}^{*}$ under an anticyclotomic variant of Perrin-Riou's big logarithm map (or a generalized Coleman power series map) to the $p$-adic Rankin $L$-function $L_\pp(f)$ of Bertolini--Darmon--Prasanna \cite{bdp}, as extended in \cite{hsieh,cas-split} to the $p$-multiplicative case. 
With this link between $\mathbf{z}^{*}_\infty$ and $L_\pp(f)$ in hand, we deduce Theorem~\ref{thm:HPMC} from the cases of the Iwasawa Main Conjecture for $L_\pp(f)$ proved in \cite{cas-CJM} and \cite{cas-CJM-err}. 

Finally, the proof of Theorem~\ref{thm:p-conv} is deduced by an application of Theorem~\ref{thm:HPMC} for a carefully chosen $K$ together with a variant of Mazur's control theorem.

\subsection{Relation to previous works}

Earlier results on the $p$-converse to the theorem of Gross--Zagier and Kolyvagin for multiplicative primes are due to Skinner--Zhang \cite{skinner-zhang} and independently Venerucci \cite{venerucci-conv}, both obtained by methods markedly different from ours. The result of \cite{skinner-zhang} is deduced as a consequence of their proof of Kolyvagin's nonvanishing conjecture for multiplicative primes $p$, and it requires additional ramification hypotheses on $E[p]$ arising from the proof of that result; while the main result of \cite{venerucci-conv} is subject to the hypothesis that $\#\Sha(E/\bQ)[p^\infty]<\infty$ (but, compared to  Theorem~\ref{thm:p-conv}, allows both $E(\bQ_p)[p]\neq 0$, and $E$ to have split multiplicative reduction at the prime $q\Vert N$, with $q\neq p$, where $E[p]$ is required to  ramify).

We also refer the reader to \cite{buy-CMH}, \cite{rivero-rotger,rivero-forum}, and \cite{buy-mtt,venerucci-invmath} for other studies of ``derivative'' classes in the presence of exceptional zeros (for circular units, Rankin--Eisenstein classes, and Beilinson--Kato elements, respectively). 
In the spirit of in this note, it would be interesting to study the interplay between the results in those settings and the corresponding Iwasawa Main Conjectures.

\subsection{Acknowledgements}
During the work on this note, the author was partially supported by the NSF grant DMS-2401321 and the 2024-2025 AMS Centennial Research Fellowship.

\section{Exceptional zeros for Heegner points}\label{sec:p-BSD}\label{sec:previous}

In this section we study the behavior of $\Lambda$-adic systems of Heegner classes in the presence of ``exceptional zeros''. The key new ingredient is the explicit reciprocity law of Theorem~\ref{thm:ERL} in the split multiplicative case, involving a certain ``derived'' $\Lambda$-adic Heegner class.  

The results in this section apply more generally to newforms of weight $2$, but for simplicity we shall restrict ourselves the case of rational elliptic curves.

\subsection{Setting}\label{subsec:setting} 

Let $E/\bQ$ be an elliptic curve of conductor $N$, let $f\in S_2(\Gamma_0(N))$ be the newform associated to $E$, and let $p$ be a prime of multiplicative reduction for $E$, so $p\Vert N$. Throughout we assume that $p>3$, but it should be noted that many of our arguments apply to any odd prime $p$.

Let $K$ be an imaginary quadratic field of odd discriminant $-D_K<-4$ and ring of integers $\cO_K$ such that \eqref{eq:Heeg-main} holds.
In particular, the prime $p$ splits in $K$, say
\[
p=\pp\overline{\pp},
\]
with $\pp$ the prime of $K$ above $p$ induced by a fixed embedding $\imath_p:\overline{\bQ}\hookrightarrow\overline{\bQ}_p$. Fix once and for all an ideal $\mathfrak{N}$ as in \eqref{eq:Heeg-main} with $\pp\mid\mathfrak{N}$.

\subsection{Derived Heegner points}

Let $K_\infty$ be the anticyclotomic $\bZ_p$-extension of $K$, and put 
\[
\Gamma={\rm Gal}(K_\infty/K)\simeq\bZ_p,\quad\quad\Lambda=\bZ_p[\![\Gamma]\!].
\]

Let $\alpha$ be the $p$-th Fourier coefficient of $f$ (so $\alpha=1$ or $-1$ according to whether  $E$ has split or nonsplit multiplicative reduction at $p$). For each $m$, let  $H_{p^m}$ denote the ring class field of $K$ of conductor $p^m$, and put $H_{p^\infty}=\bigcup_{m>0} H_{p^m}$, which contains $K_\infty$ with finite index. 

Following  \cite[\S{2.5}]{BDmumford-tate}, define the \emph{regularized Heegner point} $z_m\in E(H_{p^m})\otimes\bZ_p$ by
\begin{equation}\label{eq:reg-Heeg}
z_m:=\begin{cases}
\alpha^{-m}\cdot y_{p^m}&\textrm{if $m>0$,}\\[0.2em]
u^{-1}(1-\alpha^{-1}\sigma)\cdot y_1&\textrm{if $m=0$,}
\end{cases}
\end{equation}
where $u:=\frac{1}{2}\#\cO_K^\times$ and $\sigma\in{\rm Gal}(H_1/K)$. A direct calculation using the norm relations satisfied by the points $y_m$ (see [\emph{op.\,cit.}, \S{2.4}]) shows that the points $z_m$ are norm-compatible. Thus taking their images under the Kummer map $E(H_{p^m})\otimes\bZ_p\rightarrow{\rm Sel}(H_m,T)$, where $T=\varprojlim_rE[p^r]$ denotes the $p$-adic Tate module of $E$, we obtain  the compatible family of cohomology classes
\[
\mathbf{z}_\infty:=\{z_m\}_m\in\varprojlim_m{\rm Sel}(H_{p^m},T).
\]
With a slight abuse of notation, we continue to denote by $\mathbf{z}_\infty$ its natural image in $\check{S}_p$.  On the other hand, put
\[
\kappa_f:={\rm cor}_{H_1/K}(y_0)\in{\rm Sel}(K,T),
\]
and note that the Gross--Zagier formula \cite{GZ} yields the equivalence
\begin{equation}\label{eq:GZ-cor}
\kappa_f\not\in{\rm Sel}(K,T)_{\rm tors}\quad\Longleftrightarrow\quad{\rm ord}_{s=1}L(E/K,s)=1,
\end{equation}
where ${\rm Sel}(K,T)_{\rm tors}$ denotes the torsion submodule of ${\rm Sel}(K,T)$.

Given a class $\mathfrak{Z}_\infty$ in the kernel of the specialization map
\[
{\rm pr}_0:\check{S}_p\rightarrow{\rm Sel}(K,T),
\]
we refer the reader to \cite[Lem.~3.10]{cas-split} for the definition of the ``derivative'' class $\mathfrak{Z}_\infty'\in\check{S}_p$ satisfying the relation
\begin{equation}\label{eq:deriv}
\mathfrak{Z}_\infty=(\gamma-1)\mathfrak{Z}_\infty'
\end{equation}
for $\gamma\in\Gamma$ any topological generator. When $E(K)[p]=0$ (so the $\Lambda$-module $\check{S}_p$ is torsion-free), $\mathfrak{Z}_\infty'$ is uniquely determined by \eqref{eq:deriv}. 
We also note that letting $\kappa:\Gamma\mapsto 1+p\bZ_p$ be any fixed isomorphism, the (integral) normalization $p{\rm log}_p(\kappa(\gamma))^{-1}\mathfrak{Z}_\infty'$ is independent of $\gamma$.

A key role in this note will be played by the following ``exceptional zero formula'' for $\mathbf{z}_\infty$.

\begin{thm}\label{thm:exc-zero}
Suppose $E$ has split multiplicative reduction at $p$, and that $E(K)[p]=0$. Then ${\rm pr}_0(\mathbf{z}_\infty)=0$, 
and its derivative $\mathbf{z}_\infty'\in\check{S}_p$ is such that
\[
{\rm pr}_0(\mathbf{z}_\infty')=\mathscr{L}_\pp(f,K)\cdot\kappa_f
\]
in ${\rm Sel}(K,T[1/p])$, where $\mathscr{L}_\pp(f,K)\in\bQ_p$.
\end{thm}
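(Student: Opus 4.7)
The proof splits into two parts: the vanishing ${\rm pr}_0(\mathbf{z}_\infty)=0$ is an immediate computation from the definition of $z_0$, while the derivative formula reduces to a proportionality statement between ${\rm pr}_0(\mathbf{z}_\infty')$ and $\kappa_f$ in $\Sel(K,T[1/p])$.

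\textbf{Vanishing.} Since $-D_K<-4$, we have $u=\tfrac12\#\cO_K^\times=1$, and so $z_0=(1-\alpha^{-1}\sigma)y_1$ by \eqref{eq:reg-Heeg}. By construction of $\mathbf{z}_\infty$ from the norm-compatible tower $\{z_m\}$, the projection ${\rm pr}_0(\mathbf{z}_\infty)$ is represented by
\[
{\rm cor}_{H_1/K}(z_0)=\sum_{\tau\in{\rm Gal}(H_1/K)}\tau(1-\alpha^{-1}\sigma)\,y_1=(1-\alpha^{-1})\kappa_f,
\]
where the last equality uses the reindexing $\sum_\tau\tau\sigma=\sum_\tau\tau$ and the definition of $\kappa_f$. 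In the split multiplicative case $\alpha=1$, this vanishes.

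\textbf{Derivative.} The assumption $E(K)[p]=0$ makes $\check{S}_p$ a $\Lambda$-torsion-free module, so the previous vanishing produces, via \cite[Lem.~3.10]{cas-split}, a unique class $\mathbf{z}_\infty'\in\check{S}_p$ with $\mathbf{z}_\infty=(\gamma-1)\mathbf{z}_\infty'$. To identify ${\rm pr}_0(\mathbf{z}_\infty')$ as a $\bQ_p$-multiple of $\kappa_f$, I would combine a global symmetry argument with a local computation at $\pp$. Globally, complex conjugation $c\in{\rm Gal}(K/\bQ)$ acts on $\Gamma$ by inversion and on $\check{S}_p$ semilinearly, and the Heegner class satisfies $c\cdot\mathbf{z}_\infty=\varepsilon\cdot\iota(\mathbf{z}_\infty)$ for an Atkin--Lehner sign $\varepsilon\in\{\pm 1\}$ with $\iota\colon\Lambda\to\Lambda$ the inversion involution; combined with the identity $\iota(\gamma-1)=-\gamma^{-1}(\gamma-1)$, this forces $c\cdot{\rm pr}_0(\mathbf{z}_\infty')=-\varepsilon\cdot{\rm pr}_0(\mathbf{z}_\infty')$ in $\Sel(K,T)\otimes\bQ_p$, placing ${\rm pr}_0(\mathbf{z}_\infty')$ in the same $c$-eigenspace as $\kappa_f$. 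A local analysis at $\pp$ via the Tate uniformization $E(\bQ_p)=\bQ_p^\times/q_E^\bZ$ --- using the exact sequence $0\to\bZ_p(1)\to T|_{G_{\bQ_p}}\to\bZ_p\to 0$ with classifying class given by $q_E$ in $H^1(\bQ_p,\bZ_p(1))$ --- would then identify the local image of ${\rm pr}_0(\mathbf{z}_\infty')$ at $\pp$ as an explicit scalar multiple of the local image of $\kappa_f$, the scalar being (up to normalization by $\log_p(\kappa(\gamma))$) the Mazur--Tate--Teitelbaum $\mathcal L$-invariant $\log_p(q_E)/{\rm ord}_p(q_E)$ of $E$ at $p$.

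\textbf{Main obstacle.} The delicate step is the last one: ruling out that ${\rm pr}_0(\mathbf{z}_\infty')$ has a component in the relevant eigenspace of $\Sel(K,T)\otimes\bQ_p$ transverse to $\kappa_f$. If $\kappa_f$ is non-torsion, Kolyvagin's theorem applied to the Heegner Euler system yields $\Sel(K,T)\otimes\bQ_p=\bQ_p\kappa_f$ and the proportionality is automatic. The harder situation is $\kappa_f=0$: here I would rely on a sharper local argument at $\pp$ building on the Abel--Jacobi computations of \cite{cas-split}, together with the ramification properties of the cohomology classes involved at the other bad primes of $E$, to conclude that ${\rm pr}_0(\mathbf{z}_\infty')$ also vanishes, so that the formula $\mathscr{L}_\pp(f,K)\cdot 0=0$ holds trivially. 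This matching of local data with the Mazur--Tate--Teitelbaum $\mathcal L$-invariant is the key technical input underlying the constant $\mathscr{L}_\pp(f,K)\in\bQ_p$.
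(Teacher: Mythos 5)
Your computation of the vanishing ${\rm pr}_0(\mathbf{z}_\infty)={\rm cor}_{H_1/K}(z_0)=u^{-1}(1-\alpha^{-1})\sum_\tau y_1^\tau=0$ is exactly the paper's argument for the first assertion, and it is correct. The second assertion is where your proposal diverges: the paper does not prove it from scratch but invokes an external theorem (Molina's exceptional zero formula for Heegner points, \cite[Thm.~7.5]{molina-TAMS}, reformulated in \cite{disegni-kyoto}), which is itself a substantial result in the style of Bertolini--Darmon's derivative formulas. Your sketch attempts to reprove it, and this is where the gaps lie.

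Concretely: (1) In the case $\kappa_f$ torsion, the theorem asserts ${\rm pr}_0(\mathbf{z}_\infty')=0$ in ${\rm Sel}(K,T[1/p])$, and your proposed ``sharper local argument at $\pp$'' cannot deliver this --- a nonzero global Selmer class can perfectly well have vanishing localization at $\pp$ (it could, for instance, come from $\Sha$, or simply be locally trivial at $\pp$), so no amount of local analysis at $\pp$ rules out ${\rm pr}_0(\mathbf{z}_\infty')\neq 0$. This is not a technical loose end; it is precisely the global input that Molina's theorem supplies. (2) Even when $\kappa_f$ is non-torsion, your identification of the scalar with the $\mathscr{L}$-invariant $\log_p(q_E)/{\rm ord}_p(q_E)$ by comparing localizations at $\pp$ presupposes ${\rm loc}_\pp(\kappa_f)\neq 0$, which is not automatic, and the local derivative computation itself (relating the $\Gamma$-derivative of ${\rm loc}_\pp(\mathbf{z}_\infty)$ to the Tate period) is the technical heart of the cited result and is nowhere carried out. (3) Your eigenspace bookkeeping also needs care: with $c\cdot\mathbf{z}_\infty=\varepsilon\cdot\iota(\mathbf{z}_\infty)$ and $\iota(\gamma-1)=-\gamma^{-1}(\gamma-1)$, the derived class acquires the eigenvalue $-\varepsilon$, while the eigenvalue of $\kappa_f$ must be determined independently (the relation ${\rm pr}_0(\mathbf{z}_\infty)=0$ gives no information about it in the split case); you assert rather than verify that these match. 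In short, the first half of your proof is complete and agrees with the paper; the second half correctly isolates the difficulty but does not overcome it, whereas the paper resolves it by citation.
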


\begin{proof}
By definition \eqref{eq:reg-Heeg}, we see that
\begin{align*}
{\rm pr}_0(\mathbf{z}_\infty)={\rm cor}_{H_1/K}(z_0)
&=u^{-1}\sum_{\tau}y_1^\tau-u^{-1}\alpha^{-1}\sum_{\tau}y_1^{\sigma\tau}\\
&=u^{-1}(1-\alpha^{-1})\sum_{\tau}y_1^\tau,
\end{align*}
where $\tau$ runs over the elements in ${\rm Gal}(H_1/K)$. Since $\alpha=1$, this shows the first assertion. 

The second assertion follows from a result of S.\,Molina \cite[Thm.~7.5]{molina-TAMS} (see also Theorem~4.2.5.4 in \cite{disegni-kyoto} for a reformulation closest to ours). 
\end{proof}

\begin{rem}
The constant $\mathscr{L}_\pp(f,K)$ appearing in Theorem~\ref{thm:exc-zero} is a natural \emph{$\mathscr{L}$-invariant} attached to $E$ and the anticyclotomic $\bZ_p$-extension $K_\infty/K$ (see \cite[(3.9)]{cas-split}). 
Moreover, as shown in \cite{disegni-kyoto}, Theorem~\ref{thm:exc-zero} can be recast as a special case of the conjectures of Birch and Swinnerton-Dyer type formulated by Bertolini--Darmon \cite{BDmumford-tate} in the case of rank one.
\end{rem}

\subsection{Explicit reciprocity law}\label{subsec:ERL} 

For $g\in S_{2r}(\Gamma_0(N_g))$ a $p$-ordinary newform of weight $2r\geq 2$ and level $N_g$ \emph{coprime to $p$}, one of the main results of \cite{cas-hsieh1} is an ``explicit reciprocity law'' expressing the $p$-adic $L$-function $L_\pp(g)$ of Bertolini--Darmon--Prasanna \cite{bdp} as the image of a $\Lambda$-adic class 
(constructed from the $p$-adic \'{e}tale Abel--Jacobi image  of generalized Heegner cycles) under a Perrin-Riou big logarithm map. An extension to the multiplicative case of the $p$-adic Abel--Jacobi image computations of \cite{bdp} was obtained in \cite{cas-split}; in this section we deduce from this a corresponding extension of the aforementioned result from \cite{cas-hsieh1}.
%

Put
\[
\mathbf{T}:=T\hat\otimes_{\bZ_p}\Lambda
\]
equipped with the diagonal $G_K$-action, where $G_K$ acts on $\Lambda$ through the inverse of the character  $G_K\twoheadrightarrow\Gamma\hookrightarrow\Lambda^\times$. By Tate's uniformization, the Tate module $T$ fits into a $G_{\bQ_p}$-equivariant short exact sequence
\[
0\rightarrow\mathscr{F}^+T\rightarrow T\rightarrow\mathscr{F}^-T\rightarrow 0
\]
with $\mathscr{F}^\pm T$ free of rank $1$ over $\bZ_p$, and with the $G_{\bQ_p}$-action on $\mathscr{F}^-T$ given by the unramified character ${\rm unr}(\alpha)$ sending a Frobenius element ${\rm Fr}_p\in G_{\bQ_p}/I_p$ to $\alpha$ (and so $G_{\bQ_p}$ acts on $\mathscr{F}^+T$ via ${\rm unr}(\alpha)^{-1}\varepsilon_{\rm cyc}$ for the $p$-adic cyclotomic character $\varepsilon_{\rm cyc}$.). Put also
\[
\mathscr{F}^\pm\mathbf{T}:=\mathscr{F}^\pm T\hat\otimes_{\bZ_p}\Lambda.
\]
Let $R_0=\hat\bZ_p^{\rm nr}$ denote the completion of the ring of integers of the maximal unramified extension of $\bQ_p$, set $\Lambda_{R_0}=\Lambda\hat\otimes_{\bZ_p}R_0$, and denote by
\[
L_\pp(f)\in\Lambda_{R_0}
\]
the $p$-adic Rankin $L$-function of Bertolini--Darmon--Prasanna \cite{bdp}, as extended in \cite{hsieh,cas-split} to the $p$-multiplicative case.


\subsubsection{Split multiplicative case}

It follows from its geometric construction, that 
the image of $\mathbf{z}_\infty$ under the restriction 
\begin{equation}\label{eq:loc-pp}
{\rm loc}_\pp:\check{S}_p\rightarrow{\rm H}^1(K_\pp,\mathbf{T})
\end{equation}
lands in the image of the natural map ${\rm H}^1(K_\pp,\mathscr{F}^+\mathbf{T})\rightarrow{\rm H}^1(K_\pp,\mathbf{T})$ (cf. \cite[p.\,604]{cas-hsieh1}), which is easily seen to be an injection (see e.g. \cite[Lem.~2.4.4]{howard-invmath}). In the following, we shall often write $\bQ_p$ and $K_\pp$ interchangeably.


The next is the key new result in this note.

\begin{thm}\label{thm:ERL}
Assume that $E$ has split multiplicative reduction at $p$. Then there is a $\Lambda$-linear isomorphism
\[
\tilde{\mathcal{L}}_\pp:{\rm H}^1(K_\pp,\mathscr{F}^+\mathbf{T})\hat\otimes_{\bZ_p}R_0\rightarrow\Lambda_{R_0}
\]
such that
\[
\tilde{\mathcal{L}}_\pp({\rm loc}_\pp(\mathbf{z}_\infty'))=L_\pp(f)
\]
up to a $p$-adic unit.
\end{thm}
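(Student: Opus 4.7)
The plan is to extend the strategy of \cite[\S{5}]{cas-hsieh1}---which proves the analogous explicit reciprocity law in the good $p$-ordinary case---to the $p$-multiplicative setting, substituting in the $p$-adic Abel--Jacobi computation of \cite{cas-split} at the weight-$2$ multiplicative point for the generalized-Heegner-cycle formula of \cite{bdp}, and accommodating the exceptional zero of the split multiplicative case by passing from $\mathbf{z}_\infty$ to its derivative $\mathbf{z}_\infty'$.

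First I would construct $\tilde{\mathcal{L}}_\pp$ as an anticyclotomic variant of Perrin-Riou's big logarithm for $\mathscr{F}^+\mathbf{T}$. Since $\alpha=1$ in the split multiplicative case, $\mathscr{F}^+T\simeq\bZ_p(1)$ as a $G_{\bQ_p}$-module, so Kummer theory along the unramified tower lifting $K_\pp K_\infty/K_\pp$ identifies ${\rm H}^1(K_\pp,\mathscr{F}^+\mathbf{T})\hat\otimes_{\bZ_p} R_0$ with an inverse limit of principal local units, and $\tilde{\mathcal{L}}_\pp$ is then obtained from the associated Coleman power series construction. The isomorphism statement follows from the local Euler characteristic formula together with the absence of $\Lambda$-torsion in this module (cf.\ \cite[Lem.~2.4.4]{howard-invmath}).

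To identify the image of $\mathbf{z}_\infty'$ with $L_\pp(f)$, I would deform $f$ along the Hida family through it and promote $\mathbf{z}_\infty$ to a $\Lambda$-adic class over the two-variable (weight $\times$ anticyclotomic) Iwasawa algebra. At each arithmetic specialization of weight $k\geq 4$ the Hida family is good $p$-ordinary, and \cite{bdp} identifies the big-log image of the local Heegner class with the corresponding value of the two-variable BDP $L$-function; at the weight-$2$ multiplicative specialization corresponding to $f$, the analogous identification is supplied by \cite{cas-split}. Zariski density of arithmetic points in the Hida family then promotes these pointwise identities to a two-variable $\Lambda$-adic identity, specialization to $f$ followed by division by the non-zero-divisor $\gamma-1$ (valid because $\check{S}_p$ is $\Lambda$-torsion-free under $E(K)[p]=0$) then yields the stated formula, via $\mathbf{z}_\infty=(\gamma-1)\mathbf{z}_\infty'$.

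The principal obstacle will be matching the exceptional zeros with consistent $p$-adic units: the factor $\gamma-1$ produced by the identity ${\rm pr}_0(\mathbf{z}_\infty)=0$ of Theorem~\ref{thm:exc-zero} must align, up to a $p$-adic unit, with the behavior of the extended BDP interpolation formula at $\alpha=1$. Tracking this through the normalizations in the Abel--Jacobi computation of \cite{cas-split} and verifying consistency with the good-ordinary specializations in the Hida family---which carry no exceptional zero and therefore provide the rigidity constraint that pins down the unit---is where the delicate bookkeeping resides.
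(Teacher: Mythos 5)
Your skeleton (a Coleman/Perrin-Riou map for $\mathscr{F}^+\mathbf{T}$, the Abel--Jacobi input of \cite{cas-split} in place of \cite{bdp}, and the passage from $\mathbf{z}_\infty$ to $\mathbf{z}_\infty'$ via $\mathbf{z}_\infty=(\gamma-1)\mathbf{z}_\infty'$) matches the paper's. But the step you defer as ``the principal obstacle'' --- aligning the factor $\gamma-1$ coming from ${\rm pr}_0(\mathbf{z}_\infty)=0$ with the behaviour of the interpolation at $\alpha=1$ --- is precisely the crux, and your argument as written does not close it. In the split case the \emph{naive} interpolation map $\mathcal{L}^{(1)}_\pp$ (the one sending ${\rm loc}_\pp(\mathbf{z}_\infty)$ to $L_\pp(f)$ by the computation of \cite[Thm.~5.7]{cas-hsieh1} with the input from \cite{cas-split}) is \emph{not} an isomorphism onto $\Lambda_{R_0}$: by the argument of \cite[Lem.~4.1.3]{sharifi-JEMS} its image is $(\gamma-1)^{-1}\Lambda_{R_0}$, i.e.\ it has a pole at the trivial character --- the local manifestation of the exceptional zero. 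If your $\tilde{\mathcal{L}}_\pp$ were this naive map and were surjective onto $\Lambda_{R_0}$, then $\tilde{\mathcal{L}}_\pp({\rm loc}_\pp(\mathbf{z}_\infty'))=(\gamma-1)^{-1}L_\pp(f)$ need not even lie in $\Lambda_{R_0}$. The paper's resolution is to take $\tilde{\mathcal{L}}_\pp=(\gamma-1)\cdot\mathcal{L}^{(1)}_\pp$: the image computation shows this \emph{is} an isomorphism onto $\Lambda_{R_0}$, and then $\tilde{\mathcal{L}}_\pp({\rm loc}_\pp(\mathbf{z}_\infty'))=\mathcal{L}^{(1)}_\pp({\rm loc}_\pp(\mathbf{z}_\infty))=L_\pp(f)$ on the nose. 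So the missing idea is the determination of the image of the Coleman map when $\alpha=1$; your appeal to the local Euler characteristic does not detect the pole. (Relatedly, ``Kummer theory along the unramified tower'' is not the right framework: $K_{\infty,\pp}/K_\pp$ is not unramified, and the paper instead uses Coleman power series for a height-one Lubin--Tate tower over $H_{1,\pp}$, following de Shalit and Fukaya--Kato/Sharifi.)

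For the reciprocity law itself you propose a two-variable Hida-family and Zariski-density argument. The paper does not deform in the weight direction: it works directly with the fixed weight-two form, rerunning the one-variable density argument of \cite[Thm.~5.7]{cas-hsieh1} with the $p$-adic Abel--Jacobi computation of \cite{cas-split} substituted at the multiplicative prime; interpolation over a dense set of anticyclotomic characters already pins down the identity in $\Lambda_{R_0}$. A deformation variant (via \cite{cas-variation}) is mentioned only as an alternative construction of the map. Your route would in addition require controlling the weight-two specialization of the big Heegner class at a Steinberg prime --- itself an exceptional-zero problem in the weight variable --- so it adds difficulty rather than resolving the one you flagged.
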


\begin{proof}
For a (typically infinite) Galois extension $E_\infty$ of $\bQ_p$ with Galois group $\mathcal{G}={\rm Gal}(E_\infty/\bQ_p)$, we denote by $\Lambda_{\mathcal{G}}^\#$ the Iwasawa algebra $\bZ_p[\![\mathcal{G}]\!]$ equipped with the Galois action by the inverse of the tautological character 
\[
G_{\bQ_p}\twoheadrightarrow\mathcal{G}\hookrightarrow\Lambda_{\mathcal{G}}^\times;
\]
and for any abelian extension $F$ of $K$, let $F_\pp$ denote the completion of $F$ at the prime above $\pp$ induced by our fixed embedding $\imath_p:\overline{\bQ}\hookrightarrow\overline{\bQ}_p$. 

It follows easily from local class field theory, that $K_{\infty,\pp}$ is contained in the $\bZ_p^\times$-extension $F_\infty=\bigcup_{n\geq 0}F(\mathcal{F}[p^n])$ of $F:=H_{1,\pp}$ associated to a height $1$ Lubin--Tate formal group $\mathcal{F}$ over $\cO_F$ (see e.g.  \cite[Prop.~39]{shnidman}). We shall deduce the construction of $\tilde{\mathcal{L}}_\pp$ from the theory of Coleman power series \cite{coleman-division,de_shalit} for $F_\infty/F$. (Alternatively, it could also be deduced from a suitable specialization of \cite[Prop.~5.2]{cas-variation} under some hypotheses.) 

Put 
\[
\mathcal{G}={\rm Gal}(F_\infty/F),\quad\quad\tilde{\mathcal{G}}={\rm Gal}(F_\infty/\bQ_p).
\]
As in \cite[Def.~8.2.1]{KLZ2}, for $\mathcal{T}$ an unramified and $p$-adically complete $\bZ_p[G_{\bQ_p}]$-module, put
\[
\mathbf{D}(\mathcal{T}):=(\mathcal{T}\hat\otimes_{\bZ_p}R_0)^{G_{\bQ_p}}.
\]
Let $\hat{\bQ}_p^{\rm ur}=R_0[1/p]$ be the completion of the maximal unramified extension of $\bQ_p$, 
and let $U_{\infty}$ denote the $p$-completion of the group of norm-compatible sequences of units in the $\bZ_p^\times$-tower $\hat{\bQ}_p^{\rm ur}(\mathcal{F}[p^\infty])/\hat{\bQ}_p^{\rm ur}$. As explained in \cite[\S{4.1}]{FK-sharifi} and \cite[\S{4.1}]{sharifi-JEMS}, the classical Coleman power series map
\[
{\rm Col}:U_{\infty}\rightarrow\Lambda_{\mathcal{G}}\hat\otimes_{\bZ_p}R_0
\]
(as constructed \cite[\S{I.3}]{de_shalit} for general height $1$ Lubin--Tate formal groups, generalizing the basic case of  the multiplicative group $\hat{\mathbf{G}}_m$) extends uniquely to a $\Lambda_{R_0}$-linear map
\begin{equation}\label{eq:Col}
{\rm Col}_{\mathcal{T}}:{\rm H}^1(\bQ_p,\mathcal{T}(1)\hat\otimes_{\bZ_p}\Lambda_{\tilde{\mathcal{G}}}^\#)\hat\otimes_{\bZ_p}R_0\rightarrow S^{-1}\Lambda_{\tilde{\mathcal{G}}}\hat\otimes_{\bZ_p}\mathbf{D}(\mathcal{T})\hat\otimes_{\bZ_p}R_0,
\end{equation}
where $S$ denotes the multiplicative subset of $\Lambda_{\tilde{\mathcal{G}}}$ generated by the elements in the kernel of the augmentation map $\Lambda_{\tilde{\mathcal{G}}}\rightarrow\bZ_p$. Taking 
\begin{equation}\label{eq:T}
\mathcal{T}=\mathscr{F}^+T(-1),
\end{equation} 
as we shall do from now on, the module $\mathbf{D}(\mathcal{T})$ defines a $\bZ_p$-lattice in $\mathbf{D}_{\rm crys}(\mathscr{F}^+T[1/p])$ (see \cite[Rem.~8.2.2]{KLZ2}), and hence pairing against the differential $\omega_f$ attached to $f$ we obtain a $\bZ_p$-linear isomorphism
\begin{equation}\label{eq:omega_f}
\omega_f:\mathbf{D}(\mathcal{T})\rightarrow\bZ_p
\end{equation} 
(cf. \cite[Prop.~10.1.1]{KLZ2}). Put $\Gamma_\pp={\rm Gal}(K_{\infty,\pp}/K_\pp)$, and note that 
\begin{equation}\label{eq:T(1)}
\mathcal{T}(1)\simeq\bZ_p({\rm unr}(\alpha)).
\end{equation} 
Hence from local Tate duality we see that corestriction for $F_\infty/K_{\infty,\pp}$ defines an isomorphism
\[
{\rm H}^1(\bQ_p,\mathcal{T}(1)\hat\otimes_{\bZ_p}\Lambda_{\tilde{\mathcal{G}}}^\#)\otimes_{\Lambda_{\tilde{\mathcal{G}}}}\Lambda_{\Gamma_\pp}\simeq
{\rm H}^1(\bQ_p,\mathcal{T}(1)\hat\otimes_{\bZ_p}\Lambda_{\Gamma_\pp}^\#),
\]
where the second tensor product on the left is via the natural projection $\Lambda_{\tilde{\mathcal{G}}}\rightarrow\Lambda_{\Gamma_\pp}$. Hence composing \eqref{eq:Col} and \eqref{eq:omega_f} and corestricting to $K_{\infty,\pp}$ we obtain the $\Lambda_{\Gamma_\pp}$-linear map
\begin{equation}\label{eq:Col-p}
\bigl\langle{\rm Col}_{\mathcal{T}}(-),\omega_f\bigr\rangle:{\rm H}^1(\bQ_p,\mathcal{T}(1)\hat\otimes_{\bZ_p}\Lambda_{\Gamma_\pp}^\#)\hat\otimes_{\bZ_p}R_0\rightarrow(\gamma_\pp-1)^{-1}\Lambda_{\Gamma_\pp}\hat\otimes_{\bZ_p}R_0,
\end{equation}
where $\gamma_\pp\in\Gamma_{\pp}$ is any topological generator; and tensoring \eqref{eq:Col-p} with $\Lambda$ over $\Lambda_{\Gamma_\pp}$ (via the natural algebra map $\Lambda_{\Gamma_\pp}\rightarrow\Lambda$ arising from the inclusion $\Gamma_\pp\subset\Gamma$ as a decomposition group at $\pp$) we finally obtain the $\Lambda$-linear map
\begin{equation}\label{eq:ac-log}
\mathcal{L}_\pp^{(1)}:=
\bigl\langle{\rm Col}_{\mathcal{T}}(-),\omega_f\bigr\rangle\otimes 1:{\rm H}^1(K_\pp,\mathscr{F}^+\mathbf{T})\hat\otimes_{\bZ_p}R_0\rightarrow(\gamma-1)^{-1}\Lambda_{R_0},
\end{equation} 
noting the isomorphism $\mathcal{T}(1)\hat\otimes_{\bZ_p}\Lambda_{\Gamma_\pp}^\#\otimes\Lambda\simeq\mathscr{F}^+\mathbf{T}$ as $G_{\bQ_p}=G_{K_\pp}$-modules. 

It follows from the same argument as in \cite[Lem.~4.1.3]{sharifi-JEMS} that $\mathcal{L}_\pp$ is injective, with 
\[
{\rm image}(\mathcal{L}_\pp^{(1)})=\begin{cases}
(\gamma-1)^{-1}\Lambda_{R_0}&\textrm{if $\alpha=1$,}\\[0.2em]
\Lambda_{R_0}&\textrm{if $\alpha=-1$}.
\end{cases}
\]
Moreover, by construction for any $\mathbf{z}\in{\rm H}^1(K_\pp,\mathscr{F}^+\mathbf{T})$ the Laurent series $\mathcal{L}_\pp^{(1)}(\mathbf{z})\in(\gamma-1)^{-1}\Lambda_{R_0}$ interpolates (a branch determined by $\omega_f$ of) the Bloch--Kato logarithm and dual exponential maps over specializations of $\mathbf{z}$ by characters of $\Gamma$ (cf. \cite[Thm.~3.4]{cas-hsieh-sigma}; for $\epsilon$ the $\bZ_p$-basis of the $p$-adic Tate module of $\mathcal{F}$ determined by a fixed choice $(\zeta_{p^n})_{n\geq 1}$ of compatible $p$-power roots of unity, $\mathcal{L}_\pp^{(1)}$ is essentially the inverse of the map $\Omega_{V,1}^\epsilon$ arising from \cite[Thm.~3.2]{cas-hsieh-sigma} for $V=\mathscr{F}^+T[1/p]$ in the notations in \emph{op.\,cit.}).
%

Since $E$ has multiplicative reduction at $p$ by assumption, $\alpha=1$. By the same calculation as in \cite[Thm.~5.7]{cas-hsieh1}, but replacing the appeal in [\emph{op.\,cit.}, p.\,599] to results from \cite{bdp} by their corresponding extension in \cite{cas-split} to the multiplicative case, we see that \eqref{eq:ac-log} sends the localized $\Lambda$-adic Heegner class ${\rm loc}_\pp(\mathbf{z}_\infty)$ to $L_\pp(f)$ up to a $p$-adic unit. Hence the $\Lambda$-linear map 
\[
\mathcal{L}_\pp^{(2)}:=(\gamma-1)\cdot\mathcal{L}^{(1)}_\pp
\]
is injective with image equal to $\Lambda_{R_0}$, and satisfies
\begin{equation}\label{eq:ERL-sp0}
\mathcal{L}_\pp^{(2)}({\rm loc}_\pp(\mathbf{z}_\infty))=(\gamma-1)\cdot L_\pp(f)
\end{equation}
up to a $p$-adic unit. 
Since $\mathbf{z}_\infty=(\gamma-1)\cdot\mathbf{z}_\infty'$ by Theorem~\ref{thm:exc-zero} with ${\rm loc}_\pp(\mathbf{z}_\infty')\in{\rm H}^1(K_\pp,\mathscr{F}^+\mathbf{T})$, restricting $\mathcal{L}_\pp^{(2)}$ to the subspace
\[
(\gamma-1){\rm H}^1(K_\pp,\mathscr{F}^+\mathbf{T})\subset{\rm H}^1(K_\pp,\mathscr{F}^+\mathbf{T})
\]
and dividing the result by $(\gamma-1)$ (noting that ${\rm H}^1(K_\pp,\mathscr{F}^+\mathbf{T})$ is torsion-free, as follows from the fact that ${\rm H}^0(K_{\infty,\pp},\mathscr{F}^+T)=0$), we obtain a map $\tilde{\mathcal{L}}_\pp$ as in the statement which by \eqref{eq:ERL-sp0} satisfies the stated explicit reciprocity law. 
\end{proof}

\subsubsection{Nonsplit multiplicative case}

We record the version of Theorem~\ref{thm:ERL} in the case where $E$ has nonsplit multiplicative reduction at $p$ obtained in the course of its proof.

\begin{cor}\label{cor:ERL}
Assume that $E$ has nonsplit multiplicative reduction at $p$.  Then there exists a $\Lambda$-linear isomorphism
\[
\mathcal{L}_\pp:{\rm H}^1(K_\pp,\mathscr{F}^+\mathbf{T})\hat\otimes_{\bZ_p}R_0\rightarrow\Lambda_{R_0}
\]
such that
\[
\mathcal{L}_\pp({\rm loc}_\pp(\mathbf{z}_\infty))=L_\pp(f)
\]
up to a $p$-adic unit.
\end{cor}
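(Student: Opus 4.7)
The plan is to extract the corollary directly from what was already established in the course of the proof of Theorem~\ref{thm:ERL}. The construction of $\mathcal{L}_\pp^{(1)}$ in \eqref{eq:ac-log}---via the Coleman map ${\rm Col}_{\mathcal{T}}$ for $\mathcal{T} = \mathscr{F}^+T(-1)$ over the Lubin--Tate tower $F_\infty/F$, paired against $\omega_f$, and then corestricted to $K_{\infty,\pp}$---is insensitive to whether $\alpha = 1$ or $\alpha = -1$; only the image computation (following \cite[Lem.~4.1.3]{sharifi-JEMS}) distinguishes the two cases.

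Concretely, I would run the construction of $\mathcal{L}_\pp^{(1)}$ verbatim as in the proof of Theorem~\ref{thm:ERL}. The key observation is that in the nonsplit case $\alpha=-1$, that same image computation shows that $\mathcal{L}_\pp^{(1)}$ already lands in $\Lambda_{R_0}$, rather than in the fractional ideal $(\gamma-1)^{-1}\Lambda_{R_0}$ that forces the rescaling step in the split case. Injectivity holds by the identical argument, so $\mathcal{L}_\pp := \mathcal{L}_\pp^{(1)}$ is directly a $\Lambda$-linear isomorphism onto $\Lambda_{R_0}$; no passage to a derivative class and no correction by $(\gamma-1)$ is required.

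For the explicit reciprocity formula I would appeal to the same computation that yielded \eqref{eq:ERL-sp0}, which combines \cite[Thm.~5.7]{cas-hsieh1} with the extension of the $p$-adic Abel--Jacobi image calculations to the $p$-multiplicative setting carried out in \cite{cas-split}. Since $\alpha=-1$ (with $p>3$), the Euler-type factor $1-\alpha^{-1}=2$ is a $p$-adic unit, so in particular ${\rm pr}_0(\mathbf{z}_\infty)$ is not forced to vanish and the reciprocity law reads simply $\mathcal{L}_\pp({\rm loc}_\pp(\mathbf{z}_\infty)) = L_\pp(f)$ up to a $p$-adic unit---without the spurious factor of $(\gamma-1)$ that had to be inserted in the split case to match the denominator in the target of $\mathcal{L}_\pp^{(1)}$. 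The genuine technical inputs (the Coleman-map construction and the extended Abel--Jacobi computation) are already in place from the proof of Theorem~\ref{thm:ERL}, so there is no new obstacle; the nonsplit case is actually the simpler one, precisely because Theorem~\ref{thm:exc-zero} and the derived class play no role.
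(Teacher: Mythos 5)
Your proposal is correct and follows essentially the same route as the paper: the paper's proof likewise observes that when $\alpha=-1$ the map \eqref{eq:ac-log} is already $\Lambda_{R_0}$-valued (so no derivative class or division by $\gamma-1$ is needed), and then invokes the same calculations from \cite{cas-hsieh1} and \cite{cas-split} for the reciprocity law.
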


\begin{proof}
After the proof of Theorem~\ref{thm:ERL}, it suffices to note that when $E$ has nonsplit multiplicative reduction of $p$, so $\alpha=-1$, the map \eqref{eq:ac-log} is $\Lambda_{R_0}$-valued, so in this case directly from the calculations in \cite{cas-hsieh1} and \cite{cas-split} we obtain the result.
\end{proof}

\section{Iwasawa theory}\label{sec:p-conv}

We keep the setting of \S\ref{subsec:setting}, and let $\mathbf{z}_\infty^{*}\in\check{S}_p$ be the $\Lambda$-adic Heegner class defined by
\[
\mathbf{z}_\infty^{*}:=\begin{cases}
\mathbf{z}_\infty'&\textrm{if $E$ has split multiplicative reduction at $p$,}\\[0.2em]
\mathbf{z}_\infty&\textrm{otherwise.}
\end{cases}
\]
In this section we describe the two ingredients we need from Iwasawa theory: some results on variants of the anticyclotomic Iwasawa Main Conjecture for $E/K$ (one in terms of $L_\pp(f)$, and another in terms  $\mathbf{z}_\infty^{*}$, which is new to this paper),  
and a variant of Mazur's control theorem.

\subsection{Anticyclotomic Main Conjectures}

Similarly as in \cite[\S{2.3}]{JSW} and \cite[\S{2.1}]{cas-split}, define the anticyclotomic Selmer group 
\[
{\rm Sel}_\pp(K_\infty,E[p^\infty]):={\rm ker}\biggl\{{\rm H}^1(K_\infty,E[p^\infty])\rightarrow\prod_{w}\frac{{\rm H}^1(K_{\infty,w},E[p^\infty])}{{\rm H}_\pp^1(K_{\infty,w},E[p^\infty])}\biggr\},
\]
where $w$ runs over all finite primes of $K_\infty$, and we put
\[
{\rm H}_\pp^1(K_{\infty,w},E[p^\infty]):=
\begin{cases}
{\rm H}^1(K_{\infty,w},E[p^\infty])&\textrm{if $w\mid\overline{\pp}$,}\\[0.2em]
0&\textrm{otherwise.}
\end{cases}
\]
Write $X_\pp$ 
for the Pontryagin dual of ${\rm Sel}_\pp(K_\infty,E[p^\infty])$. The following is one of the main results of \cite{cas-CJM}, in the final form given in \cite{cas-CJM-err}. 

\begin{thm}\label{thm:BDP-IMC}
Assume that:
\begin{itemize}
\item[(i)] 
$E[p]$ is irreducible as a $G_\bQ$-module,
\item[(ii)] If $2$ is nonsplit in $K$, then $2\Vert N$,
\item[(iii)] $E$ has nonsplit multiplicative reduction at each prime $q\Vert N$ which is nonsplit in $K$, and that there is at least one such prime $q$ at which $E[p]$ is ramified,
\item[(iv)] $E(\bQ_p)[p]=0$.
\end{itemize}
Then $X_\pp$ is $\Lambda$-torsion and
\[
{\rm char}_{\Lambda}(X_\pp)=(L_\pp(f))
\]
as ideals in $\Lambda_{R_0}$.
\end{thm}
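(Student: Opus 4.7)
The plan is to establish the anticyclotomic main conjecture by combining the Euler system of Heegner points, whose link with $L_\pp(f)$ is provided by Corollary~\ref{cor:ERL} in the nonsplit case and by (the derived version of) Theorem~\ref{thm:ERL} in the split case, with the reverse divisibility coming from Eisenstein congruences on a unitary group. Before attacking the equality of characteristic ideals, I would verify via a Mazur-type control theorem that $X_\pp$ is indeed $\Lambda$-torsion: up to a finite kernel and cokernel, the specialization of $X_\pp$ at the augmentation ideal of $\Lambda$ is a Selmer group of $E/K$ with local conditions modified at the primes above $p$, whose finiteness at a single specialization (hence generically) is forced by hypotheses (i)--(iv) via Gross--Zagier and Kolyvagin applied to $E/K$ for a well-chosen auxiliary direction.

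For the divisibility ${\rm char}_\Lambda(X_\pp)\supseteq(L_\pp(f))$, I would apply the $\Lambda$-adic Heegner point Euler system of Howard, together with its refinements by Castella--Hsieh, in the Poitou--Tate setup adapted to the $\pp$-strict/$\overline{\pp}$-relaxed local conditions defining $X_\pp$. This yields a divisibility ${\rm char}_\Lambda(X_\pp)\supseteq{\rm char}_\Lambda\bigl(\check{S}_p/\Lambda\mathbf{z}_\infty^{*}\bigr)$, which Corollary~\ref{cor:ERL} (nonsplit case) or the derived-class argument already present in the proof of Theorem~\ref{thm:ERL} (split case) transfers into the desired divisibility by $L_\pp(f)$. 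The hypotheses in the theorem, especially the ramification at $q$, are what guarantee the ``big image'' and the self-dual-twist input so that the Euler system argument runs cleanly and produces a sharp divisibility rather than one with spurious extra factors.

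The reverse divisibility $(L_\pp(f))\supseteq{\rm char}_\Lambda(X_\pp)$ is the harder direction and the main obstacle. I would import it from the three-variable main conjecture on $\mathrm{U}(3,1)$ proved by X.~Wan via congruences with Klingen--Eisenstein series, specialized along the anticyclotomic line. In the good $p$-ordinary setting this is by now standard, so the real difficulty is extending Wan's argument to $p$-multiplicative reduction, where the ordinary filtration degenerates (one has $\alpha=\pm 1$, with no alternative $p$-stabilization to lean on) and the local Euler factor at $p$ differs from the good-ordinary case. Handling this multiplicative extension, in the spirit of the erratum \cite{cas-CJM-err}, is the genuine new technical input on the $L$-function side required for the equality, and is where I expect the bulk of the work to lie.
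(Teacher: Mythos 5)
The first thing to say is that the paper does not prove Theorem~\ref{thm:BDP-IMC} at all: its proof consists entirely of the citation to \cite[Thm.~4.4]{cas-CJM}, in the form given in \cite[Thm.~1.1]{cas-CJM-err}. The theorem is an external input; the new content of the present paper is the reciprocity law (Theorem~\ref{thm:ERL}, Corollary~\ref{cor:ERL}) and the translation of Proposition~\ref{prop:equiv}, which convert the cited result into the Heegner-point main conjecture. Your proposal instead sets out to reprove the cited result. As a description of the strategy behind it your outline is broadly on target (an Euler-system upper bound for the Selmer group on one side, Eisenstein congruences on a unitary group on the other, glued by the explicit reciprocity law), but it is a research program rather than a proof: the decisive step, the divisibility $(L_\pp(f))\supseteq{\rm char}_\Lambda(X_\pp)$ in the $p$-multiplicative setting, is explicitly deferred (``where I expect the bulk of the work to lie''). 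That step is precisely the content of \cite{cas-CJM} and of the correction in \cite{cas-CJM-err}, so nothing checkable is supplied at the point where the theorem is actually established.

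Two concrete problems with the parts you do sketch. First, your route to the $\Lambda$-torsionness of $X_\pp$ cannot work as stated: hypotheses (i)--(iv) are purely local and Galois-theoretic and impose no bound on the analytic or algebraic rank of $E/K$, so ``Gross--Zagier and Kolyvagin applied to $E/K$'' gives no finiteness at the bottom layer; moreover control for the $\pp$-strict, $\overline{\pp}$-relaxed condition is itself delicate in the split multiplicative case (this is the exceptional-zero phenomenon, cf.\ Theorem~\ref{thm:control} and its reliance on ${\rm log}_p(q_E)\neq 0$). Torsionness of $X_\pp$ is instead a consequence of either divisibility once one knows $L_\pp(f)\neq 0$ (nonvanishing of Cornut--Vatsal/Hsieh type via the BDP formula), or equivalently that ${\rm loc}_\pp(\mathbf{z}_\infty^{*})$ is non-torsion; no separate control-theorem argument is needed or available. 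Second, the Euler-system divisibility in the Heegner-point formulation carries a square, ${\rm char}_\Lambda(X_{\rm tors})\supseteq{\rm char}_\Lambda\bigl(\check{S}_p/\Lambda\mathbf{z}_\infty^{*}\bigr)^2$, and only after the global-duality bookkeeping of Proposition~\ref{prop:equiv} does it become ${\rm char}_\Lambda(X_\pp)\supseteq(L_\pp(f))$; your unsquared inequality ${\rm char}_\Lambda(X_\pp)\supseteq{\rm char}_\Lambda\bigl(\check{S}_p/\Lambda\mathbf{z}_\infty^{*}\bigr)$ conflates the two Selmer structures and is not what either side of the argument produces.
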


\begin{proof}
This is \cite[Thm.~4.4]{cas-CJM}, 
in the final form given in \cite[Thm.~1.1]{cas-CJM-err}.
\end{proof}

As is well-known in Iwasawa theory, given an ``explicit reciprocity law''  expressing $L_\pp(f)$ as the image under a Perrin-Riou big logarithm map of a $\Lambda$-adic system of cohomology classes $\mathfrak{Z}_\infty$, the main conjecture for $L_\pp(f)$ can be alternatively stated (in favorable  circumstances) in the form of a main conjecture without reference to $p$-adic $L$-functions (but rather $\mathfrak{Z}_\infty$ instead) in the spirit of 
\cite[Conj.~B]{PR-HP}. Here we explain the form this takes in our setting.

Put $\mathscr{F}^+E[p^\infty]:=\mathscr{F}^+T\otimes\bQ_p/\bZ_p$, which defines a $G_{\bQ_p}$-invariant submodule of $E[p^\infty]$ under the isomorphism $E[p^\infty]\simeq T\otimes_{}\bQ_p/\bZ_p$. 
Following \cite{greenberg-iwasawa,greenberg-zeros}, define 
\[
\mathfrak{Sel}_{\rm Gr}(K_\infty,E[p^\infty]):={\rm ker}\biggl\{{\rm H}^1(K_\infty,E[p^\infty])\rightarrow\prod_{w\nmid p}{\rm H}^1(K_{\infty,w},E[p^\infty])\times\prod_{w\mid p}\frac{{\rm H}^1(K_{\infty,w},E[p^\infty])}{{\rm H}_{\rm Gr}^1(K_{\infty,w},E[p^\infty])}\biggr\},
\]
where for $w\mid p$ we put 
\[
{\rm H}^1_{\rm Gr}(K_{\infty,w},E[p^\infty]):={\rm ker}\bigl\{{\rm H}^1(K_{\infty,w},E[p^\infty])\rightarrow{\rm H}^1(I_w,E[p^\infty]/\mathscr{F}^+E[p^\infty])\bigr\}
\] 
with $I_w\subset G_{K_{\infty,w}}$ a decomposition group at $w$; and define the \emph{strict Selmer group} by 
\[
{\rm Sel}_{\rm str}(K_\infty,E[p^\infty]):={\rm ker}\biggl\{{\rm H}^1(K_\infty,E[p^\infty])\rightarrow\prod_{w\nmid p}{\rm H}^1(K_{\infty,w},E[p^\infty])\times\prod_{w\mid p}\frac{{\rm H}^1(K_{\infty,w},E[p^\infty])}{{\rm H}_{\rm str}^1(K_{\infty,w},E[p^\infty])}\biggr\},
\]
where for $w\mid p$ we put 
\[
{\rm H}^1_{\rm str}(K_{\infty,w},E[p^\infty]):={\rm ker}\{{\rm H}^1(K_{\infty,w},E[p^\infty])\rightarrow{\rm H}^1(K_{\infty,w},E[p^\infty]/\mathscr{F}^+E[p^\infty])\}.
\] 
Then clearly ${\rm Sel}_{\rm str}(K_{\infty},E[p^\infty])\subset\mathfrak{Sel}_{\rm Gr}(K_\infty,E[p^\infty])$, and there is a $\Lambda$-module pseudo-isomorphism 
\[
{\rm Sel}_{p^\infty}(E/K_\infty)\sim{\rm Sel}_{\rm str}(K_\infty,E[p^\infty])
\]
(see e.g. \cite{coates-greenberg}).  
On the other hand, let $\check{S}_\pp$ denote the subgroup of 
\[
{\rm H}^1(K,\mathbf{T})\simeq\varprojlim_L{\rm H}^1(L,T)
\] 
where $L$ runs over the finite extensions of $K$ contained in $K_\infty$, cut out by the local conditions that are everywhere orthogonal complements to ${\rm H}_\pp^1(K_{\infty,w},E[p^\infty])$ under local Tate duality. 
\begin{prop}\label{prop:equiv}
Suppose $\mathbf{z}_\infty^{*}$ is not $\Lambda$-torsion and the localization map
\[
{\rm loc}_\pp:\check{S}_p\rightarrow{\rm H}^1(K_\pp,\mathscr{F}^+\mathbf{T})
\]
is nonzero. Then the following are equivalent:
\begin{itemize}
\item[(i)] $X_\pp$ and $\check{S}_\pp$ are both   $\Lambda$-torsion, with
\[
{\rm char}_\Lambda(X_\pp)\Lambda_{R_0}=\bigl(L_\pp\bigr(f)).
\]
\item[(ii)] $X$ and $\check{S}_p$ both have $\Lambda$-rank one, with
\[
{\rm char}_\Lambda(X_{\rm tors})={\rm char}_\Lambda\bigl(\check{S}_p/\Lambda\mathbf{z}_\infty^{*}\bigr)^2.
\]
\end{itemize}
\end{prop}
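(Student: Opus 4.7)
The strategy is the standard dictionary in anticyclotomic Iwasawa theory between a Selmer-group formulation and a $p$-adic $L$-function formulation of the Main Conjecture, combined with the explicit reciprocity law of Theorem~\ref{thm:ERL} and Corollary~\ref{cor:ERL}. The argument follows the pattern of Perrin-Riou's original treatment (see \cite{PR-HP}) and its further developments (e.g.\ Howard, Agboola--Howard).

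\emph{Step 1: Poitou--Tate exact sequence.} First I would compare the Selmer structures cutting out $\check{S}_p$ (Greenberg at both $\pp$ and $\overline{\pp}$) and $\check{S}_\pp$ (relaxed at $\pp$, strict at $\overline{\pp}$) via global duality over $\Lambda$. Keeping track of the corresponding dual structures, whose $H^1$'s are ${\rm Sel}_{p^\infty}(E/K_\infty)$ and ${\rm Sel}_\pp(K_\infty,E[p^\infty])$, Poitou--Tate should yield (up to pseudo-isomorphism) a five-term exact sequence of finitely generated $\Lambda$-modules
\[
0 \to \check{S}_p \to \check{S}_\pp \xrightarrow{{\rm loc}_\pp} {\rm H}^1(K_\pp,\mathscr{F}^+\mathbf{T}) \to X_\pp \to X \to 0.
\]
The asymmetry between $\pp$ and $\overline{\pp}$ in $\check{S}_\pp$ and $X_\pp$, contrasted with the symmetric treatment in $\check{S}_p$ and $X$, is exactly what is responsible for the square appearing in statement (ii), via complex conjugation combined with functional-equation type identities in the anticyclotomic setting.

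\emph{Step 2: Identification via the explicit reciprocity law.} Tensoring the above sequence with $R_0$ and applying $\tilde{\mathcal{L}}_\pp$ (or $\mathcal{L}_\pp$ in the nonsplit case), the middle term is identified with $\Lambda_{R_0}$, and ${\rm loc}_\pp(\mathbf{z}_\infty^{*})$ is sent to $L_\pp(f)$ up to a $p$-adic unit. Under the hypothesis that $\mathbf{z}_\infty^{*}$ is not $\Lambda$-torsion and ${\rm loc}_\pp$ is nonzero, the image of $\check{S}_\pp$ inside $\Lambda_{R_0}$ therefore contains $L_\pp(f)\cdot\Lambda_{R_0}$.

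\emph{Step 3: Rank count and characteristic ideals.} A $\Lambda$-rank count in the exact sequence, using that ${\rm H}^1(K_\pp,\mathscr{F}^+\mathbf{T})$ has $\Lambda$-rank one and that $L_\pp(f)$ is nonzero, yields
\[
{\rm rank}_\Lambda(\check{S}_\pp)=0 \Longleftrightarrow {\rm rank}_\Lambda(\check{S}_p)=1,\qquad {\rm rank}_\Lambda(X_\pp)=0 \Longleftrightarrow {\rm rank}_\Lambda(X)=1,
\]
and in fact each of these four conditions is equivalent to the others (using that the image of $\check{S}_p$ already contains a nonzero element). Passing to characteristic ideals of the torsion quotients in the exact sequence,
\[
{\rm char}_\Lambda\bigl({\rm H}^1(K_\pp,\mathscr{F}^+\mathbf{T})/\Lambda\cdot{\rm loc}_\pp(\mathbf{z}_\infty^{*})\bigr)\cdot{\rm char}_\Lambda(X)_{\rm tors} \;=\; {\rm char}_\Lambda(X_\pp)\cdot{\rm char}_\Lambda\bigl(\check{S}_p/\Lambda\mathbf{z}_\infty^{*}\bigr),
\]
and combining with ${\rm char}_\Lambda({\rm H}^1(K_\pp,\mathscr{F}^+\mathbf{T})/\Lambda\cdot{\rm loc}_\pp(\mathbf{z}_\infty^{*}))\Lambda_{R_0}=(L_\pp(f))$ from Step 2 and the $\pp\leftrightarrow\overline{\pp}$ symmetry (which contributes a second factor of $\check{S}_p/\Lambda\mathbf{z}_\infty^{*}$), the equivalence (i)$\Leftrightarrow$(ii) falls out.

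\emph{Main obstacle.} The delicate point will be setting up the Poitou--Tate sequence so that the ``square'' in (ii) emerges cleanly from the $\pp/\overline{\pp}$ symmetry, and simultaneously handling the split multiplicative case. In that case, Theorem~\ref{thm:ERL} produces $\tilde{\mathcal{L}}_\pp$ only after restricting to $(\gamma-1)\,{\rm H}^1(K_\pp,\mathscr{F}^+\mathbf{T})$ and dividing by $(\gamma-1)$, so the passage between $\mathbf{z}_\infty$ and its derivative $\mathbf{z}_\infty^{*}=\mathbf{z}_\infty'$ must be reconciled with the characteristic ideal identities, noting that $(\gamma-1)$ is not invertible on $\Lambda$-torsion modules and that an a priori exceptional zero of $L_\pp(f)$ at the trivial character of $\Gamma$ must be absorbed consistently on both sides.
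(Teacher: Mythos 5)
Your plan is essentially the argument the paper invokes: its proof of Proposition~\ref{prop:equiv} is just a citation to the standard global-duality dictionary (\cite[Thm.~5.1]{BCK}, \cite[App.~A]{cas-BF}) with the explicit reciprocity law replaced by Theorem~\ref{thm:ERL}/Corollary~\ref{cor:ERL}, which is exactly the Poitou--Tate $+$ reciprocity-law $+$ $\pp\leftrightarrow\overline{\pp}$-symmetry scheme you outline. The only looseness is that your five-term sequence compares Selmer structures that are not nested at $\overline{\pp}$ (ordinary vs.\ strict), so the cited references run the comparison in two stages, but you correctly flag this set-up, and the split-multiplicative subtlety, as the points needing care.
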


\begin{proof}
As in the good $p$-ordinary case considered in \cite[Thm.~5.1]{BCK}, 
this can be readily extracted from the arguments in \cite[Appendix~A]{cas-BF}, replacing the appeal to the explicit reciprocity law of \cite[App.~A.1]{cas-BF} (a  special case of \cite[Thm.~5.7]{cas-hsieh1}) by Theorem~\ref{thm:ERL} and Corollary~\ref{cor:ERL} above. 
\end{proof}

\begin{rem}
The assertions in part (ii) of Proposition~\ref{prop:equiv} are a natural extension of Perrin-Riou's Heegner point main conjecture \cite[Conj.~B]{PR-HP} to multiplicative primes $p$. Moreover, in the \emph{split} multiplicative case, they might be viewed as a ``primitive'' of the following:
\begin{conj}\label{conj:HPMC-mult}
$\mathfrak{X}$ and $\check{S}_p$ both have $\Lambda$-rank one, with
\[
{\rm char}_\Lambda(\mathfrak{X}_{\rm tors})={\rm char}_\Lambda\bigl(\check{S}_p/\Lambda\mathbf{z}_\infty^{}\bigr)^2.
\]
\end{conj}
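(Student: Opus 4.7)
The plan is to deduce Conjecture~\ref{conj:HPMC-mult} from Theorem~\ref{thm:HPMC} in the split multiplicative case (where $\mathbf{z}_\infty^{*}=\mathbf{z}_\infty'$), by absorbing the two discrepancies between the two statements---replacing $\mathbf{z}_\infty'$ by $\mathbf{z}_\infty$, and $X$ by $\mathfrak{X}$---into matching factors of $(\gamma-1)^2$ that cancel against each other.

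For the class side, the defining relation $\mathbf{z}_\infty=(\gamma-1)\mathbf{z}_\infty'$ from Theorem~\ref{thm:exc-zero} together with the torsion-freeness of $\check{S}_p$ (ensured by hypothesis (iv), which gives $E(K)[p]=0$) yields an exact sequence
\[
0\to\Lambda\mathbf{z}_\infty'/\Lambda\mathbf{z}_\infty\to\check{S}_p/\Lambda\mathbf{z}_\infty\to\check{S}_p/\Lambda\mathbf{z}_\infty'\to 0,
\]
with $\Lambda\mathbf{z}_\infty'/\Lambda\mathbf{z}_\infty\simeq\Lambda/(\gamma-1)$. Taking characteristic ideals and squaring gives ${\rm char}_\Lambda(\check{S}_p/\Lambda\mathbf{z}_\infty)^2=(\gamma-1)^2\cdot{\rm char}_\Lambda(X_{\rm tors})$ by Theorem~\ref{thm:HPMC}.

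For the Selmer side, I would compare the strict Selmer group (whose Pontryagin dual is pseudo-isomorphic to $X$) with the Greenberg Selmer group (dual $\mathfrak{X}$). Their local conditions agree outside $p$, and at each prime $w$ of $K_\infty$ above $p$ the inflation-restriction sequence applied to $0\to\mathscr{F}^+E[p^\infty]\to E[p^\infty]\to\mathscr{F}^-E[p^\infty]\to 0$ identifies the quotient ${\rm H}^1_{\rm Gr}(K_{\infty,w},E[p^\infty])/{\rm H}^1_{\rm str}(K_{\infty,w},E[p^\infty])$ with the unramified cohomology ${\rm H}^1(G_{K_{\infty,w}}/I_w,(\mathscr{F}^-E[p^\infty])^{I_w})$. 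In the split multiplicative case, $\alpha=1$ makes $\mathscr{F}^-T$ the trivial $G_{\bQ_p}$-module, and since $K_\infty/K$ is totally ramified at each of $\pp$ and $\overline{\pp}$, this quotient is a copy of $\bQ_p/\bZ_p$ whose Pontryagin dual has $\Lambda$-characteristic $(\gamma-1)$ at each of $w=\pp,\overline{\pp}$. A global duality (Poitou--Tate) argument in the spirit of \cite[Appendix~A]{cas-BF}, together with Theorem~\ref{thm:ERL} and Corollary~\ref{cor:ERL} to guarantee the nonvanishing of the relevant global-to-local maps via $\mathbf{z}_\infty'$, should then yield
\[
{\rm char}_\Lambda(\mathfrak{X}_{\rm tors})=(\gamma-1)^2\cdot{\rm char}_\Lambda(X_{\rm tors}),
\]
with $\mathfrak{X}$ of $\Lambda$-rank one (agreeing with that of $X$, since the local differences at $\pp,\overline\pp$ are $\Lambda$-torsion). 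Combining with the class-side computation gives the stated equality.

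The main obstacle is the Selmer-group step. Unlike the good ordinary case---where Greenberg and strict Selmer groups are pseudo-isomorphic and the primitive Heegner point main conjecture is equivalent to its non-primitive form---here the local contributions at $\pp$ and $\overline{\pp}$ must be pinned down as \emph{exactly} $(\gamma-1)$ each, rather than being torsion merely supported on $(\gamma-1)$. Moreover, a subtle asymmetry appears: while the strict and Greenberg conditions both treat $\pp$ and $\overline{\pp}$ on an equal footing, the Heegner class $\mathbf{z}_\infty$ naturally controls only the $\pp$-component via Theorem~\ref{thm:ERL}, so the global-to-local surjectivity at the $\overline{\pp}$-component appears to require an additional input---either the complex conjugation symmetry swapping $\pp\leftrightarrow\overline\pp$, or an auxiliary argument using the companion $\overline{\pp}$-Heegner class---in order to recover the full factor of $(\gamma-1)^2$ on the Selmer side.
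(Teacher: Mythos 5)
The statement you are trying to prove is labelled as a \emph{Conjecture} in the paper, and the paper does not prove it: it appears inside a Remark whose purpose is only to explain why the proven result (Theorem~\ref{thm:HPMC}, equivalently part (ii) of Proposition~\ref{prop:equiv}) ``might be viewed as a primitive'' of Conjecture~\ref{conj:HPMC-mult}. Your proposal follows exactly the paper's own heuristic: the class-side computation (${\rm char}_\Lambda(\check{S}_p/\Lambda\mathbf{z}_\infty)=(\gamma-1)\cdot{\rm char}_\Lambda(\check{S}_p/\Lambda\mathbf{z}_\infty')$, hence a factor $(\gamma-1)^2$ after squaring) is correct and is the part the paper states as ``clearly''; but the Selmer-side identity ${\rm char}_\Lambda(\mathfrak{X}_{\rm tors})=(\gamma-1)^2\cdot{\rm char}_\Lambda(X_{\rm tors})$ is precisely the step the paper hedges with ``it should be possible to show that''. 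You have not closed that step, and you candidly say so yourself.

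To be concrete about why this is a genuine gap and not a routine verification: the paper only exhibits an \emph{injection} of $\mathfrak{Sel}_{\rm Gr}(K_\infty,E[p^\infty])/{\rm Sel}_{\rm str}(K_\infty,E[p^\infty])$ into $\prod_{v\in\{\pp,\overline{\pp}\}}(\bQ_p/\bZ_p)/(\alpha-1)(\bQ_p/\bZ_p)$, which in the split case has characteristic ideal $(\gamma-1)^2$. This gives only the divisibility ${\rm char}_\Lambda(\mathfrak{X}_{\rm tors})\mid(\gamma-1)^2\,{\rm char}_\Lambda(X_{\rm tors})$ (together with the rank statement, since the local terms are $\Lambda$-torsion). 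For the equality one must show the injection has pseudo-null cokernel, i.e.\ that the Greenberg Selmer group surjects (up to pseudo-null error) onto the unramified local quotients at \emph{both} primes above $p$. This is an exceptional-zero statement in its own right, and it does not follow formally from Theorem~\ref{thm:ERL} or Corollary~\ref{cor:ERL}: the reciprocity law controls only the image of the Heegner class in the $\pp$-component of $\mathscr{F}^+\mathbf{T}$, whereas the needed surjectivity concerns the $\mathscr{F}^-$-quotient at $\pp$ and at $\overline{\pp}$. Your closing paragraph identifies this asymmetry correctly, but the suggested remedies (complex conjugation, a companion $\overline{\pp}$-class) are not carried out, and the Poitou--Tate argument ``in the spirit of \cite[Appendix~A]{cas-BF}'' is invoked rather than executed. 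As written, the proposal is an honest reconstruction of the paper's heuristic, not a proof; the conjecture remains open in the paper, and your argument does not upgrade it to a theorem.
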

\noindent Indeed, directly from the definitions we see that  $\mathfrak{Sel}_{\rm Gr}(K_\infty,E[p^\infty])/{\rm Sel}_{\rm str}(K_\infty,E[p^\infty])$ injects into
\begin{align*}
&\prod_{w\mid p}{\rm ker}\bigl\{{\rm H}^1(K_{\infty,w},E[p^\infty]/\mathscr{F}^+E[p^\infty])\rightarrow{\rm H}^1(I_{w},E[p^\infty]/\mathscr{F}^+E[p^\infty])\bigr\}\\
&\quad\simeq\prod_{w\mid p}{\rm H}^1(G_{K_{\infty,w}}/I_w,(E[p^\infty]/\mathscr{F}^+E[p^\infty])^{I_w})\simeq\prod_{v\in\{\pp,\overline{\pp}\}}(\bQ_p/\bZ_p)/(\alpha-1)(\bQ_p/\bZ_p)
\end{align*}
(cf. \cite[\S{3.2}]{skinner-mult}). Using this, it should be possible to show that
\[
{\rm char}_\Lambda(\mathfrak{X}_{\rm tors})=
\begin{cases}
{\rm char}_\Lambda(X_{\rm tors})\cdot(\gamma-1)^2&\textrm{if $E$ has split multiplicative reduction at $p$,}\\[0.2em]
{\rm char}_\Lambda(X_{\rm tors})&\textrm{otherwise.}
\end{cases}
\]
On the other hand, since clearly
\[
{\rm char}_\Lambda\bigl(\check{S}_p/\Lambda\mathbf{z}_\infty^{}\bigr)={\rm char}_\Lambda\bigl(\check{S}_p/\Lambda\mathbf{z}_\infty'\bigr)\cdot(\gamma-1),
\]
we see that the assertions in part (ii) of Proposition~\ref{prop:equiv} should be equivalent to Conjecture~\ref{conj:HPMC-mult} in the nonsplit multiplicative case, and to a ``primitive'' version of the same conjecture -- with the trivial zeros accounted for by the factor $(\gamma-1)^2$ removed -- in the split multiplicative case.
\end{rem}

\subsection{Mazur's control theorem}

For every $n$, put $\Gamma_n$ for the unique subgroup of $\Gamma$ of index $p^n$, and set $K_n=(K_\infty)^{\Gamma_n}$, so ${\rm Gal}(K_n/K)=\Gamma/\Gamma_n\simeq\bZ/p^n\bZ$.

\begin{thm}\label{thm:control}
The natural restriction map
\[
{\rm Sel}_{p^\infty}(E/K_n)\rightarrow{\rm Sel}_{p^\infty}(E/K_\infty)^{\Gamma_n}
\]
has finite kernel and cokernel, of order bounded independently of $n$.
\end{thm}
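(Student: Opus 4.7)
The plan is to adapt Mazur's classical snake-lemma strategy to the commutative diagram comparing the defining sequences of the two Selmer groups,
\[
\begin{CD}
0 @>>> {\rm Sel}_{p^\infty}(E/K_n) @>>> {\rm H}^1(K_n, E[p^\infty]) @>>> \prod_{v'} {\rm H}^1(K_{n,v'}, E)[p^\infty] \\
@. @VV s_n V @VV h_n V @VV g_n V \\
0 @>>> {\rm Sel}_{p^\infty}(E/K_\infty)^{\Gamma_n} @>>> {\rm H}^1(K_\infty, E[p^\infty])^{\Gamma_n} @>>> \prod_{w} {\rm H}^1(K_{\infty,w}, E)[p^\infty]
\end{CD}
\]
in which the vertical arrows are restriction from $K_n$ to $K_\infty$, and $v'$ (resp.\,$w$) runs over the non-archimedean places of $K_n$ (resp.\,$K_\infty$). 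By the snake lemma, the assertion reduces to bounding $\ker h_n$, $\mathrm{coker}\,h_n$, and each $\ker g_{n,v'}$ by a constant independent of $n$.

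For the middle map, inflation-restriction gives
\[
\ker h_n = {\rm H}^1(\Gamma_n, E(K_\infty)[p^\infty]), \qquad \mathrm{coker}\,h_n \hookrightarrow {\rm H}^2(\Gamma_n, E(K_\infty)[p^\infty]),
\]
and the latter vanishes because $\Gamma_n \simeq \bZ_p$ has $p$-cohomological dimension one. To kill the former, I would argue $E(K_\infty)[p^\infty] = 0$ by leveraging hypothesis (iv): the completion $K_{\infty,\pp}/K_\pp = \bQ_p$ is a totally ramified pro-$p$ extension, so a nonzero element of $E(K_{\infty,\pp})[p]$ would admit a nontrivial fixed vector, producing a nonzero element of $E(\bQ_p)[p]$ and contradicting (iv); since completion at $\pp$ induces an injection $E(K_\infty)[p] \hookrightarrow E(K_{\infty,\pp})[p]$, this forces $E(K_\infty)[p] = 0$ and hence $E(K_\infty)[p^\infty] = 0$. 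Thus $h_n$ is an isomorphism.

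For the local kernels, inflation-restriction at each $v'$ of $K_n$ (with $w$ the place of $K_\infty$ above it) identifies
\[
\ker g_{n,v'} = {\rm H}^1(K_{\infty,w}/K_{n,v'}, E(K_{\infty,w}))[p^\infty],
\]
and these must be bounded uniformly. For $v$ not dividing $pN$, $E$ has good reduction and the anticyclotomic extension is unramified at $v$, so by the classical argument (using that $E(K_{n,v'}^{\mathrm{nr}})[p^\infty]$ is trivial once the residue field is large enough) the local kernel vanishes for all but finitely many $v$. For $v \mid N$ with $v \nmid p$, the local anticyclotomic extension is still unramified and $\ker g_{n,v'}$ is bounded by the $p$-part of the component group of $E$ at $v$. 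For $v \in \{\pp, \overline{\pp}\}$ above $p$, I would invoke Tate's $p$-adic uniformization $E(\overline{K}_v) \simeq \overline{K}_v^\times/q_E^{\bZ}$ to replace $E(K_{\infty,w})$ by $K_{\infty,w}^\times/q_E^\bZ$ in the totally ramified $\bZ_p$-tower $K_{\infty,w}/K_v$, and appeal to standard bounds for the $p$-primary cohomology of local units along such a tower, again using $E(\bQ_p)[p] = 0$ to control the torsion.

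The main obstacle is the analysis at the primes above $p$, where the anticyclotomic extension is infinitely ramified and the ``unramified cohomology vanishes'' argument no longer applies. Here, Tate's uniformization together with hypothesis (iv) is the essential tool to show that ${\rm H}^1(\Gamma_{n,\pp}, E(K_{\infty,\pp}))[p^\infty]$ and its counterpart at $\overline{\pp}$ are of bounded order as $n \to \infty$.
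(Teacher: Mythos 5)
Your overall strategy (snake lemma applied to the defining diagrams, inflation--restriction for the global and local vertical maps) is the same skeleton that underlies the reference the paper actually cites, and your treatment of the middle map and of the primes away from $p$ is essentially fine. But there is a genuine gap at the primes above $p$ in the split multiplicative case, and it is precisely the point where the paper has to invoke a deep external input. Writing $G=\Gal(K_{\infty,w}/K_{n,v'})$ for $v'\mid\pp$ and using Tate's uniformization $0\to q_E^{\bZ}\to K_{\infty,w}^\times\to E(K_{\infty,w})\to 0$, Hilbert 90 kills ${\rm H}^1(G,K_{\infty,w}^\times)$ and one gets an isomorphism
\[
{\rm H}^1(G,E(K_{\infty,w}))\;\simeq\;\ker\Bigl({\rm H}^2(G,q_E^{\bZ})\longrightarrow {\rm H}^2(G,K_{\infty,w}^\times)\Bigr),
\]
where both ${\rm H}^2$'s are isomorphic to $\bQ_p/\bZ_p$ and the map is, via local class field theory, evaluation of homomorphisms at ${\rm rec}_w(q_E)\in G\simeq\bZ_p$. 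This kernel is finite (of order $|{\rm rec}_w(q_E)|_p^{-1}$) if and only if $q_E$ is \emph{not} a universal norm from the local anticyclotomic tower; if ${\rm rec}_w(q_E)=0$ the kernel is all of $\bQ_p/\bZ_p$ and the control theorem fails. So the local kernel at $\pp$ is not controlled by ``standard bounds for cohomology of local units'' plus hypothesis (iv): hypothesis (iv) only governs torsion and is irrelevant to the contribution of the period lattice $q_E^{\bZ}$, which is the whole source of the exceptional-zero phenomenon.

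The missing ingredient is the nonvanishing ${\rm rec}_\pp(q_E)\neq 0$, which for the anticyclotomic $\bZ_p$-extension at a split prime reduces to $\log_p(q_E)\neq 0$. This is not elementary: it is the theorem of Barr\'e-Sirieix--Diaz--Gramain--Philibert (the Mahler--Manin conjecture on the transcendence of the Tate period), and it is exactly what the paper cites alongside Greenberg's local computation (\cite[Prop.~3.7]{greenberg-cetraro}) to conclude. Your proof would be complete once you add this input at the primes above $p$; as written, the final paragraph asserts boundedness there on the strength of hypothesis (iv) alone, which is false.
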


\begin{proof}
Since we assume that $p$ splits in $K$, this follows from \cite[Prop.~3.7]{greenberg-cetraro} and the main result of \cite{BSDGG} (showing that ${\rm log}_p(q_E)\neq 0$ for the Tate period $q_E$ of $E/\bQ_p$).
\end{proof}

\section{Proof of main results}


\begin{proof}[Proof of Theorem~\ref{thm:HPMC}]
It follows from Theorem~1.10 in \cite{CV-dur} and the definition of $\mathbf{z}_\infty'$ that the $\Lambda$-adic Heegner class $\mathbf{z}_\infty^*$ is non-torsion (both in the split and the nonsplit multiplicative case), 
%
and by the same argument as in \cite[Cor.~4.5]{BCK}, this yields ${\rm loc}_\pp(\check{S}_p)\neq 0$. In view of Proposition~\ref{prop:equiv}, the result now follows from Theorem~\ref{thm:BDP-IMC}.
\end{proof}

\begin{proof}[Proof of Theorem~\ref{thm:p-conv}]

Choose an imaginary quadratic field $K$ such that:
\begin{itemize}
\item[(i)] $q$ is ramified in $K$,
\item[(ii)] if $N$ is odd or divisible by $4$, then $2$ splits in $K$,
\item[(iii)] every prime factor $\ell\neq q$ of $N$ splits in $K$,
\item[(iv)] $L(E^K,1)\neq 0$.
\end{itemize}
Note that condition (iii) implies in particular that $p$ splits in $K$. The existence of such $K$ (in fact, of infinitely many such $K$) is ensured by \cite[Thm.~B.1]{friedberg-hoffstein}. 

By condition (iv) and Kolyvagin's (or alternatively, Kato's) work, we have $\#{\rm Sel}_{p^\infty}(E^K/\bQ)<\infty$, and so our corank one assumption implies that 
\begin{equation}\label{eq:cork-1}
{\rm corank}_{\bZ_p}{\rm Sel}_{p^\infty}(E/K)=1.
\end{equation}
Let $\gamma\in\Gamma$ be a topological generator.  
By Theorem~\ref{thm:control}, it follows that ${\rm rank}_{\bZ_p}(X/(\gamma-1)X)=1$, and so the first assertion of Theorem~\ref{thm:HPMC} implies that 
\[
{\rm char}_\Lambda(X_{\rm tors})\not\subset (\gamma-1).
\]
By the equality of characteristic ideals in the same result, this shows that
\[
{\rm char}_{\Lambda}(\check{S}_p/\Lambda\mathbf{z}_\infty^{*})\not\subset(\gamma-1),
\]
and so (by another application of Theorem~\ref{thm:control}, and noting that ${\rm rank}_{\bZ_p}{\rm Sel}(K,T)=1$ by \eqref{eq:cork-1}) $\mathbf{z}_\infty^{*}$ has non-torsion image under the projection
\[
{\rm pr}_0:\check{S}_p\rightarrow{\rm Sel}(K,T).
\]
By Theorem~\ref{thm:exc-zero} in the split multiplicative case, and its proof in the nonsplit multiplicative case (so $(1-\alpha^{-1})\neq 0$), ${\rm pr}_0(\mathbf{z}_\infty^{*})$ is a nonzero multiple of $\kappa_f$, and hence by the Gross--Zagier formula (see \eqref{eq:GZ-cor}) the above shows that ${\rm ord}_{s=1}L(E/K,s)=1$. Together with condition (iv) and the factorization $L(E/K,s)=L(E,s)L(E^K,s)$, this yields the result.
\end{proof}

\bibliographystyle{amsalpha}
\bibliography{Birch-conv-refs}

\end{document}